\documentclass[12pt]{article}
\usepackage{amsmath,amsthm}
\usepackage{amsfonts,mathrsfs}
\usepackage{color}
\textheight 230mm \setlength{\topmargin}{-20mm}
\textwidth 170mm \setlength{\oddsidemargin}{-10mm}

\setlength{\arraycolsep}{2pt}
\allowdisplaybreaks
\theoremstyle{plain}
\newtheorem{thm}{\noindent\bf Theorem}
\newtheorem{cor}{\noindent\bf Corollary}
\newtheorem{lem}{\noindent\bf Lemma}
\newtheorem{prop}{\noindent\bf Proposition}
\theoremstyle{remark}
\newtheorem{rmk}{\noindent \bf \textit{Remark}}
\newtheorem{exam}{\noindent \bf Example}
\newcommand{\D}{\displaystyle}
\newcommand{\non}{\nonumber}
\newcommand{\Pm}{\mathbb{P}}
\newcommand{\Em}{\mathbb{E}}

\usepackage{ulem}


\newcommand{\kb}{\kappa_{b}^{+}}
\newcommand{\kc}{\kappa_{c}^{-}}
\newcommand{\ke}{\kappa^{\{d\}}}
\newcommand{\wz}{\mathbb{W}}
\newcommand{\zz}{\mathbb{Z}}
\newcommand{\Rr}{V}
\newcommand{\Ror}{V^{(\omega)}}

\newcommand{\wor}{w^{(\omega)}}
\newcommand{\zor}{z^{(\omega)}}
\newcommand{\wqr}{w^{(q)}}
\newcommand{\wpr}{w^{(p)}}
\newcommand{\zqr}{z^{(q)}}
\newcommand{\zpr}{z^{(p)}}

\newcommand{\wo}{W^{(\omega)}}
\newcommand{\zo}{Z^{(\omega)}}
\newcommand{\wzo}{\mathbb{W}^{(\omega)}}

\newcommand{\asumref}[1]{{\textbf{(H)}}}

\begin{document}

\title{On  weighted occupation times for refracted spectrally negative L\'evy processes}
\author{Bo Li and Xiaowen Zhou\\
School of Mathematics and LPMC, Nankai University, Tianjin, P.R.China\\
Department of Mathematics and Statistics, Concordia University, Canada}
\maketitle


\begin{abstract}
For refracted spectrally negative L\'evy processes, we identify expressions of several quantities related to Laplace transforms on their weighted occupation times until first exit times. Such quantities are expressed in terms of unique solutions to integral equations involving   weight functions and scale functions for the associated spectrally negative L\'evy processes. Previous results on refracted L\'evy processes are recovered.
\end{abstract}
\textbf{Keywords:} spectrally negative L\'evy process, refracted process, weighted occupation time, exit time, resolvent, integral equation.

\section{Introduction}
In this paper,
we are interested in evaluating the $\omega$-weighted occupation time of refracted spectrally negative L\'evy process,
which is formally defined in \cite{Kyprianou2010:Rlevy} as the unique solution to stochastic differential equation
\begin{equation}\label{defn:x}
dX_{t}= dY_{t}- \delta 1_{\{X_{t}\geq a\}} dt=dZ_{t}+ \delta 1_{\{X_{t}< a\}} dt,
\end{equation}
where $Y=(Y_{t})_{t\geq0}$ is a spectrally negative L\'evy process (SNLP in short) and
$Z_{t}=Y_{t}-\delta t, \,\, t\geq 0$.

A motivation of studying refracted process stems from its
applications in stochastic control. In many insurance risk models,
see for example
\cite{Kyprianou2012:optimal:threshold,Hernandez-Hernandez2016:optimal,Cheung2016:optimal}
and references therein, to maximise the amount of discounted
dividends paid up to time of ruin, among all admissible control
strategies the optimal dividend strategy is either paying nothing or
paying dividends as much as possible in the so-called solvency
regions.
%
When a constant ceiling $\delta$ is imposed for the dividend rate,
under  further conditions the optimal policy reduces to the so-called threshold dividend strategy and the surplus process with dividends becomes  the refracted process, in which
the insurance company  pays nothing when the reserve is below a certain critical level,
and pays dividends at the maximal  rate $\delta$ when the reserve is above the level.

To our best knowledge,
the refracted L\'evy process is first introduced in  \cite{Jeanblanc1995, Asmussen1997:optimal} for Brownian risk mode,
where by making use of HJB functional equation, the threshold strategy is shown to be optimal if the dividend rate is bounded above by some constant.
The analogous problem for Cr\'emer-Lundberg risk model with exponential claim size distribution is studied in \cite{Gerber2006:naaj}.
Some actuarial quantities of risk model with threshold dividend are investigated in \cite{Lin2006:ime, Wan2007:ime}.
It is studied under the framework of spectrally negative L\'evy process in
\cite{Kyprianou2012:optimal:threshold} where a sufficient condition on L\'evy measure is found under which  the threshold strategy is optimal.
\cite{Kyprianou2010:Rlevy} focuses on the existence and uniqueness of solution to \eqref{defn:x} and some fluctuation identities for $X$ are also established.
\cite{Kyprianou2013:occupationtime:Rlevy} investigats the occupation times of half lines,  \cite{Renaud2014:Rlevy} identifies the distribution of various functionals related,
and \cite{Zhou2015:Rlevy, Wu2016:Rlevy} mainly consider general L\'evy process with rational jumps.

During the last several years there have been a series of papers
concerning  occupation time related problems for SNLP. These
problems arise from both theoretical interests and  the applications
in risk theory and finance; see for example \cite{Gerber2012:omega,
Landriault2011:occupationtime:levy,
Loeffen2014:occupationtime:levy,Zhou2014:occupationtime:levy,Zhou2015:occupationtime:levy,BoLi:sub1,Kyprianou2013:occupationtime:Rlevy,Renaud2014:Rlevy,Zhou2015:Rlevy,Wu2016:Rlevy}.
Among them using a perturbation approach,
\cite{Landriault2011:occupationtime:levy} studies the occupation
times of semi-infinite intervals. For the occupation times spent in a certain
interval, using a strong approximation approach
\cite{Loeffen2014:occupationtime:levy} identifies joint Laplace
transforms until first passage times.  Laplace transforms involving
joint occupation times are investigated in
\cite{Zhou2014:occupationtime:levy,Zhou2015:occupationtime:levy}
with a  Poisson approach. Expressions involving occupation time over
a finite interval and resolvent measure are found in
\cite{Guerin2014:occupationmeasure:levy}. The results are typically
expressed using scale functions for SNLP. By further improving the Poisson
approach,    fluctuation identities on  weighted
occupation times for SNLP are obtained in \cite{BoLi:sub1} which generalize
many of the previous results. In \cite{BoLi:sub1} the results are
expressed in terms of unique solutions to integral equations specified using the
scale functions and the weight function for occupation time.

Given the previous results on refracted SNLP and on occupation times
for SNLP,  our goal in this paper is  to establish identities concerning the Laplace
transform of
\begin{equation}\label{defn:L}
L(t):=\int_{0}^{t}\omega(X_{s}) ds
\end{equation}
for  a locally bounded measurable nonnegative function
$\omega(\cdot)$   on $\mathbb{R}$, which is called a
$\omega$-weighted occupation time in \cite{BoLi:sub1}. Such
identities can also be treated as identities for refracted SNLP
killed at an occupation time dependent rate.  To prove the main results, we adapt the previous approach of
\cite{BoLi:sub1} by replacing the  Poisson approach there with a Feyman-Kac type argument.

Our main results are expressed in terms of functions $(\wor,\zor)$,
which extends the notations introduced in \cite{Renaud2014:Rlevy}
and depends on the classical scale function $(W,Z)$ as well as the
weight function $\omega$. For some simpler examples of weight
function, we could find more explicit expressions of $(\wor,\zor)$
in terms of $(W,Z)$ and recover previous results in
\cite{Kyprianou2010:Rlevy,Kyprianou2013:occupationtime:Rlevy,Renaud2014:Rlevy}.

The remainder of the paper is organised as follows. After the review of previous work on refracted SNLP and occupation times for SNLP and a summary of the main results, in Section \ref{sec:2} we present preliminary results on SNLP, scale functions and exit problems for refracted SNLP. Section \ref{sec:3} contains  the main results whose proofs are deferred to Section \ref{sec:5}. More detailed discussions are carried out for examples in Section \ref{sec:4}.


\section{Preliminaries}\label{sec:2}
We first briefly review the spectrally negative L\'{e}vy processes, the associated scale functions and some known results.
For further details, we refer the readers to  Bertoin \cite{Bertoin96:book} and Kyprianou \cite{Kyprianou2014:book:levy}.
Throughout the paper,
$Y$ denotes an SNLP,
$Z_{t}=Y_{t}-\delta t$
and $X$ is the unique solution to \eqref{defn:x} called refracted SNLP. The law of $X$ for $X_{0}=x$ is denoted by $\Pm_{x}$ and the corresponding expectation by $\Em_{x}$.
We write $\Pm$ and $\Em$ when $x=0$.

Let $Y=(Y_{t})_{t\geq0}$ be a spectrally negative L\'{e}vy process,
that is a stochastic process with stationary and independent increments and without positive jumps.
Its Laplace transform exists and is specified by
\[\Em(\exp(\theta Y_{t}))=\exp(\psi(\theta) t), \quad\forall \theta\geq0.\]
Function $\psi(\theta)$, known as the Laplace exponent of $Y$, is continuous, strictly convex on $\mathbb{R}^{+}$ and given by the L\'{e}vy-Khintchine formula:
\[
    \psi(\theta)=\frac{\sigma^{2}}{2} \theta^{2}+ \gamma \theta
    +\int_{(0,\infty)}(e^{-\theta x}-1+\theta
    x   \boldsymbol{1}_{\{x<1\}}) \Pi (d x),
\]
where $\gamma\in\mathbb{R}$, $\sigma\geq0$ and the L\'{e}vy measure $\Pi$ is a $\sigma$-finite measure on $(0,\infty)$ such that $\int_{\mathbb{R}^{+}} (1\wedge x^{2}) \Pi (dx)< \infty$.
For $q\geq0$, the $q$-scale function is defined as a continuous and increasing function such that $W^{(q)}(x)=0$ for $x< 0$ and
\[
\int_{0}^{\infty} e^{-sy} W^{(q)}(y) dy=\frac{1}{\psi(s)-q}, \quad\text{for $s> \Phi(q)$},
\]
where $\Phi(q):=\sup\{s\geq 0, \psi(s)=q\}$ denotes the right inverse of $\psi(\cdot)$. With  the first scale function $W^{(q)}(\cdot)$, we can define another  scale function by
\begin{equation}\label{Z-W}
Z^{(q)}(x):=1+ q \int_{0}^{x}W^{(q)}(y) dy, \quad\text{for $x\in\mathbb{R}$}.
\end{equation}
We write $W(x)=W^{(0)}(x)$ and $Z(x)=Z^{(0)}(x)$ when $q=0$.
It is known that, for $q>0$
\begin{equation}\label{eqn:limit:wz}
\frac{W^{(q)}(x-a)}{W^{(q)}(x)}\to e^{-\Phi(q)a},
\quad
e^{-\Phi(q)x}W^{(q)}(x)\to\Phi'(q)
\quad \text{and}\quad
\frac{Z^{(q)}(x)}{W^{(q)}(x)}\to \frac{q}{\Phi(q)},
\end{equation}
as $x\to\infty$.
Since $\log W(x)$ is concave on $\mathbb{R}^{+}$, it is absolutely continuous with respect to Lesbegue measure and $W'(x)$ is well defined a.e.
Similar conclusion can be derived for every $W^{(q)}(x)$ by change of measure.
We refer to \cite{Kuznetsov2012:scalefunction, Hubalek2011:scalefunction:examples} for more detailed discussions and examples of scale functions.

For process $Z$, we denote by
$\psi_{Z}(\theta)$ its Laplace exponent,
$\varphi(q):=\sup\{s\geq0, \psi_{Z}(s)=q\}$ the right inverse of $\psi_{Z}(\cdot)$,
and
$(\wz^{(q)}, \zz^{(q)})$ the $q$-scale functions  associated with $Z$.
It can be checked directly that,
$\psi_{Z}(s)=\psi(s)- \delta s$, $\varphi(q)>\Phi(q)$ and
\begin{equation}\label{eqn:wzw}
\wz(x)=W(x)+ \delta \int_{0-}^{x} \wz(x-z)W(dz),
\end{equation}
where $W(dz)$ stands for the  Stieltjes integral on $\mathbb{R}$ induced by function $W$ and
$W(\{0\})=W(0)=\wz(0)(1-\delta W(0))$.

\begin{rmk}
We remark that, for any locally bounded measurable function $f\geq 0$,
its Stieltjes integral with respect to $m(\cdot)$, denoted
\[
\int_{a}^{b} f(z)m(dz)= \int_{(a,b]} f(z) m(dz)=\int_{0}^{b-a} f(z+a)m(dz+a)
\]
is the integral of $f$ on the interval $(a,b]$ and that $\int_{a}^{b} m(dz)=m(b)-m(a)$.
If the integral is on $[a,b]$,
we write $\int_{a-}^{b}f(z)m(dz)=\int_{[a,b]}f(z)m(dz)$ as in \eqref{eqn:wzw}.
\end{rmk}

The following hypothesis on $Y$ is introduced in \cite{Kyprianou2010:Rlevy}.
\begin{description}\label{H}
\item[(H)] The constant $0<\delta< \gamma+ \int_{(0,1)} x \Pi(dx)$ if $Y$ has paths of bounded variation.
\end{description}
It is shown in \cite[Theorem 1]{Kyprianou2010:Rlevy} that under hypothesis \textbf{(H)},
equation \eqref{defn:x} has a unique strong solution.
Actually, for the case of bounded variation, we have $W(0)=\frac{1}{\gamma+ \int_{(0,1)} x \Pi(dx)}$. Thus,  condition \textbf{(H)} is equivalent to the following assumption:
\begin{description}
\item[(H')]   $1-\delta W(0)>0$, i.e.
 $Z$ is not the negative of a subordinator,
\end{description}
which will be in force throughout the remainder of the paper.

We denote by
\[
\kb:=\inf\{t>0, X_{t}>b\}\quad\text{and}\quad \kc:=\inf\{t>0, X_{t}<c\},
\]
with convention $\inf\emptyset=\infty$,
the first passage times of $X$
and for any $x, y\in\mathbb{R}$ define the following auxiliary function,
\begin{align}
w(x,y):=&\ W(x-y)+ \delta \int_{a}^{x} \wz(x-z)W(dz-y)\label{defn:wr:1}\\
=&\ \wz(x-y)- \delta \int_{y-}^{a} \wz(x-z)W(dz-y) \label{defn:wr:2}
\end{align}
in light of \eqref{eqn:wzw}.
For  $a>0$, the equation above reduces to
\begin{align*}
w(x,0)=&\ W(x)+ \delta \int_{a}^{x} \wz(x-z)W^{\prime}(z) dz
\end{align*}
and
\begin{align*}
 w(x,y)=&\ \left\{\begin{array}{l@{\quad \text{for}\quad}l}
W(x-y)+  \delta \int_{a}^{x} \wz(x-z)W^{\prime}(z-y) dz & y\in(0,a]\\
\wz(x-y) & y\in(a,\infty)
\end{array}\right.
\end{align*}
which are functions used in \cite{Kyprianou2010:Rlevy} for
{ conclusion on} interval $(0,b)$,
and the following results follow from Theorems 4 and 6 in \cite{Kyprianou2010:Rlevy} after a spatial shifting argument.
\begin{prop}\label{prop:Rlevy}
For any $x,c,b$ such that $a, x\in(c,b)$, we have
\begin{equation}\label{eqn:ky2010:prob}
\Pm_{x}\left(\kb<\kappa_{c}^{-}\right)= \frac{w(x,c)}{w(b,c)},
\end{equation}
and the resolvent measure $V$ of $X$ killed at exiting $[c,b]$ is given by
\begin{align}\label{eqn:ky2010:reso}
\Rr f(x):=&\ \int_{0}^{\infty} \Em_{x}[f(X_{t}); t\leq \kb\wedge\kappa_{c}^{-}] dt\non\\
=&\ \int_{c}^{b} f(y) \left(\frac{w(x,c)}{w(b,c)}w(b,y)- w(x,y) \right) dy
\end{align}
for any bounded measurable function $f$ on $[c, b]$.
\end{prop}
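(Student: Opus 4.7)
The plan is to reduce the statement to the special case $c=0$ (with $a>0$), which is exactly the setting of Theorems 4 and 6 in \cite{Kyprianou2010:Rlevy}, via a spatial translation. The remark ``after a spatial shifting argument'' in the excerpt signals precisely this reduction, so the work lies in bookkeeping the Stieltjes integrals rather than in any new probabilistic input.

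First, I would introduce the shifted process $\tilde X_{t}:=X_{t}-c$. Because $Y$ and $Z=Y-\delta t$ have stationary independent increments, $\tilde X$ is the unique strong solution of
\[
d\tilde X_{t}=dY_{t}-\delta \boldsymbol{1}_{\{\tilde X_{t}\geq \tilde a\}}\,dt,\qquad \tilde a:=a-c>0,
\]
driven by the same SNLP $Y$, with initial value $\tilde x=x-c\in(0,b-c)$, and hypothesis \textbf{(H${}'$)} is inherited because $\delta$ and $W(0)$ are unchanged. The first passage times translate as $\tilde\kappa_{b-c}^{+}=\kb$ and $\tilde\kappa_{0}^{-}=\kc$, and the scale functions $W$ and $\wz$ depend only on the Laplace exponents of $Y$ and $Z$, hence are identical for $\tilde X$.

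Applying \cite[Theorem 4]{Kyprianou2010:Rlevy} to $\tilde X$ with $\tilde a>0$ gives
\[
\Pm_{x}(\kb<\kc)=\tilde\Pm_{x-c}\bigl(\tilde\kappa_{b-c}^{+}<\tilde\kappa_{0}^{-}\bigr)=\frac{\tilde w(x-c,0)}{\tilde w(b-c,0)},
\]
where $\tilde w(\tilde x,0)=W(\tilde x)+\delta\int_{\tilde a}^{\tilde x}\wz(\tilde x-z)W(dz)$. The substitution $z\mapsto z-c$ in the Stieltjes integral, together with the convention $W(du-c)=W(d(u-c))$ fixed in the Remark, yields $\tilde w(x-c,0)=w(x,c)$ as defined in \eqref{defn:wr:1}, and likewise for $\tilde w(b-c,0)=w(b,c)$, proving \eqref{eqn:ky2010:prob}. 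The resolvent identity \eqref{eqn:ky2010:reso} follows in the same way: for bounded measurable $f$ on $[c,b]$, setting $g(\tilde y)=f(\tilde y+c)$ and invoking \cite[Theorem 6]{Kyprianou2010:Rlevy} for $\tilde X$ on $[0,b-c]$,
\[
\Rr f(x)=\tilde\Rr g(x-c)=\int_{0}^{b-c} g(\tilde y)\!\left(\frac{\tilde w(x-c,0)}{\tilde w(b-c,0)}\tilde w(b-c,\tilde y)-\tilde w(x-c,\tilde y)\right)\!d\tilde y,
\]
and the change of variable $y=\tilde y+c$ finishes the proof once one checks $\tilde w(\tilde x,\tilde y)=w(x,y)$ by the analogous substitution in both representations \eqref{defn:wr:1} and \eqref{defn:wr:2}.

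The only delicate point is the Stieltjes bookkeeping: in the bounded variation regime $W$ has an atom $W(\{0\})=1/(\gamma+\int_{(0,1)}x\,\Pi(dx))$ at the origin, so under the change of variable $z\mapsto z-c$ this atom must be correctly transferred to $W(dz-c)$ at $z=c$, and the integration endpoints in \eqref{defn:wr:2} (which use $y-$) must be preserved. I expect this to be straightforward given the unambiguous conventions fixed in the Remark, so no serious obstacle arises beyond careful notation.
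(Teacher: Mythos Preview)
Your proposal is correct and follows exactly the route the paper indicates: the paper does not give a separate proof but simply states that the result ``follow[s] from Theorems~4 and~6 in \cite{Kyprianou2010:Rlevy} after a spatial shifting argument,'' and you have carried out precisely that shift. The only point worth flagging is that, as the Remark immediately following the proposition observes, the density produced by translating Theorem~6 of \cite{Kyprianou2010:Rlevy} differs from the paper's expression $\frac{w(x,c)}{w(b,c)}w(b,y)-w(x,y)$ at the single point $y=a$; since both are versions of the same absolutely continuous measure this does not affect \eqref{eqn:ky2010:reso}, but you may wish to note it explicitly.
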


\begin{rmk}
Notice that for $c=0<x<b$, a version of the density for measure $V(x,dy)$ on $[0,b]$ determined by \eqref{eqn:ky2010:reso} is
\[
\frac{w(x,0)}{w(b,0)}w(b,y)- w(x,y), \quad\text{for\ } 0<y<b,\]
and its value at $y=a$ is
\[
\frac{w(x,0)}{w(b,0)}w(b,a)- w(x,a)
=
\frac{w(x,0)}{w(b,0)}W(b-a)-W(x-a),
\]
which is different from the value at $y=a$ of the the corresponding density in \cite[Theorem 6]{Kyprianou2010:Rlevy}  given by
\[\frac{w(x,0)}{w(b,0)}\wz(b-a)-\wz(x-a).\]
Otherwise, the density from (\ref{eqn:ky2010:reso}) agrees with that in \cite[Theorem 6]{Kyprianou2010:Rlevy},
and clearly they are associated with the same absolutely continuous measure on $(0, b)$.
\end{rmk}


\section{Main results}\label{sec:3}
Before stating our main results, we  introduce two more generalized scale functions
$(\wor, \zor)$ which, for the  $w(x,y)$ defined in \eqref{defn:wr:1},   are solutions to
\begin{align}
\wor(x,y)=&\ w(x,y)+ \int_{y}^{x} w(x,z)\omega(z) \wor(z,y) dz
\label{defn:wor}
\end{align}
and
\begin{align}
\zor(x,y)=&\ 1+ \int_{y}^{x} w(x,z)\omega(z) \zor(z,y) dz,
\label{defn:zor}
\end{align}
respectively. The existence and uniqueness of solutions to (\ref{defn:wor}) and (\ref{defn:zor}) are assured by Lemma \ref{lem:equation}.
It can be found that they  play similar roles as $(W,Z)$  in the fluctuation theory of SNLP.

\begin{lem}\label{lem:equation}
Let $h(x,y)$ be a locally bounded function on $\mathbb{R}^{2}$. Equation
\begin{equation}\label{eqn:lem}
H^{(\omega)}(x,y)= h(x,y)+ \int_{y}^{x} w(x,z)\omega(z) H^{(\omega)}(z,y) dz,
\end{equation}
admits a unique locally bounded solution on $\mathbb{R}^{2}$ satisfying $H^{(\omega)}(x,y)=h(x,y)$ for $x\leq y$.
\end{lem}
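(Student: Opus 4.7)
The plan is to view \eqref{eqn:lem} as a Volterra equation in the variable $x$ (for each fixed $y$) and solve it by Picard iteration. Because the scale functions $W$ and $\wz$ are continuous, the function $w(x,z)$ defined via \eqref{defn:wr:1} is locally bounded on $\mathbb{R}^{2}$; combined with the local boundedness of $\omega$, this makes $(x,z)\mapsto w(x,z)\omega(z)$ locally bounded as well. Both existence and uniqueness will then follow from a standard Volterra-type iteration adapted to the two-parameter setting, with the convention that $\int_{y}^{x}\cdot\,dz=0$ whenever $x\leq y$, so that the boundary prescription $H^{(\omega)}(x,y)=h(x,y)$ for $x\leq y$ is automatically consistent with \eqref{eqn:lem}.

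For existence, I would set $H_{0}(x,y):=h(x,y)$ and define iteratively, for $x\geq y$,
\[
H_{n+1}(x,y):=h(x,y)+\int_{y}^{x}w(x,z)\omega(z)H_{n}(z,y)\,dz,
\]
keeping $H_{n}(x,y):=h(x,y)$ for $x\leq y$. Fixing an arbitrary compact square $\mathcal{K}=[-N,N]^{2}$ and setting $M:=\sup_{(x,z)\in\mathcal{K}}|w(x,z)\omega(z)|$ and $B:=\sup_{\mathcal{K}}|h|$, a straightforward induction shows
\[
|H_{n+1}(x,y)-H_{n}(x,y)|\leq B\,\frac{M^{n+1}(x-y)^{n+1}}{(n+1)!}\qquad\text{for } y\leq x,\ (x,y)\in\mathcal{K}.
\]
Hence $\sum_{n\geq0}(H_{n+1}-H_{n})$ converges absolutely and uniformly on $\mathcal{K}$, producing a continuous, hence locally bounded, limit $H^{(\omega)}$. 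Uniform convergence permits passing to the limit under the integral in the recursion, so $H^{(\omega)}$ satisfies \eqref{eqn:lem} on $\mathcal{K}$; since $N$ is arbitrary, this yields a locally bounded solution on all of $\mathbb{R}^{2}$ with the prescribed behaviour for $x\leq y$.

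For uniqueness, suppose $H_{1},H_{2}$ are two locally bounded solutions of \eqref{eqn:lem} and set $D:=H_{1}-H_{2}$. Then $D\equiv 0$ for $x\leq y$ and $D(x,y)=\int_{y}^{x}w(x,z)\omega(z)D(z,y)\,dz$ for $x\geq y$. On any fixed compact square $\mathcal{K}$, with $B_{D}:=\sup_{\mathcal{K}}|D|<\infty$, the same induction gives
\[
|D(x,y)|\leq B_{D}\,\frac{M^{n}(x-y)^{n}}{n!},\qquad n\geq 0,
\]
for $(x,y)\in\mathcal{K}$ with $y\leq x$; letting $n\to\infty$ forces $D\equiv 0$ on $\mathcal{K}$, and arbitrariness of $\mathcal{K}$ gives uniqueness on $\mathbb{R}^{2}$. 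The argument is essentially routine and I do not anticipate a real obstacle; the only mild bookkeeping is to ensure that for $(x,y)\in\mathcal{K}$ the dummy variable $z\in[y,x]$ stays inside $[-N,N]$, so that the sup-norms $M$ and $B$ truly control every iterate and the factorial denominators from the Picard estimate deliver the uniform convergence.
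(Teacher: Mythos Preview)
Your Picard-iteration argument is correct. One small quibble: you claim the limit $H^{(\omega)}$ is continuous, but $h$ is only assumed locally bounded and $W$ may jump at $0$ in the bounded-variation case, so continuity is not guaranteed; however, uniform convergence on compacts of locally bounded iterates still gives local boundedness, which is all you need. Your bookkeeping about $z\in[y,x]\subset[-N,N]$ is fine.

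The route, however, is genuinely different from the paper's. The paper does \emph{not} prove existence by iteration: instead it first establishes probabilistically, in the course of proving Theorem~\ref{thm:1}(1), that the function $\wor(x,y)$ defined via the exit quantity $\mathcal A(x;b)$ in \eqref{eqn:kb} solves \eqref{defn:wor}; with $\wor$ in hand, existence for general $h$ is obtained by writing down the explicit candidate $H^{(\omega)}(x,y)=h(x,y)+\int_y^x\wor(x,z)\omega(z)h(z,y)\,dz$ and verifying directly that it satisfies \eqref{eqn:lem}. Uniqueness is handled not by the factorial Volterra bound but by an exponential-weight contraction: using $w(x,z)\le\wz(x-z)$ from \eqref{defn:wr:2} and choosing $s_0$ with $\widehat\wz(s_0)\le\tfrac{1}{2M_3}$, one shows $\sup|e^{-s_0x}H^{(\omega)}(x,y_0)|\le\tfrac12\sup|e^{-s_0z}H^{(\omega)}(z,y_0)|$. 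Your approach is more elementary and entirely self-contained; the paper's buys the closed-form representation \eqref{eqn:Hw}, which is precisely what is exploited afterwards (e.g.\ in Remark~\ref{rmk:defn:zor} to identify $\zor$ and in Remark~\ref{rmk:equation}). If you proceed your way, you would still need to observe that your Neumann series sums to the same formula in order to use those later consequences.
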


Given $\wor(x,y)$ and Lemma \ref{lem:equation}, we have the following remark.

\begin{rmk}\label{rmk:equation}
Let $\nu(dy)$ be a Radon measure on $\mathbb{R}$. By Fubini's Theorem one can check directly that  $ \left(\wor \nu\right)(x,y):=\int_{y}^{x} \wor(x,z)\nu(dz)$ satisfies
\[
\left(\wor\nu\right)(x,y)= \int_{y}^{x} w(x,z)\nu(dz) + \int_{y}^{x} w(x,z)\omega(z) \left(\wor\nu\right)(z,y) dz,
\]
and is thus the unique solution to this equation.
In particular, for any $f\geq0$ locally bounded, $\wor f(x,y):=\int_{y}^{x} \wor(x,z) f(z) dz$ is the solution to
\[
\wor f(x,y)= \int_{y}^{x}w(x,z)f(z) dz+ \int_{y}^{x} w(x,z) \omega(z) \left(\wor f\right)(z,y) dz.
\]
\end{rmk}

We now state the main results.

\begin{thm}\label{thm:1} Given $c<b$, we have for $x\in[c,b]$
\begin{equation}
\Em_{x}\left[e^{-L(\kb)}; \kb\leq \kc\right]
= \frac{\wor(x,c)}{\wor(b,c)}\label{thm:1a}
\end{equation}
and
\[
\Em_{x}\left[e^{-L(\kc)}; \kc\leq \kb\right]
= \zor(x,c)- \frac{\wor(x,c)}{\wor(b,c)}\zor(b,c).
\]
For any $x, y\in (c, b)$, an expression of the resolvent of $X$ killed at exiting $[c,b]$ is given by
\begin{align*}
\Ror(x,dy):=&\ \int_{0}^{\infty}\Em_{x}\left(e^{-L(t)}; X_{t}\in dy, t\leq \kb\wedge\kc\right)\,dt\\
=&\ \left(\frac{\wor(x,c)}{\wor(b,c)}\wor(b,y)- \wor(x,y)\right) dy.
\end{align*}
\end{thm}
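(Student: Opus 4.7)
The core idea is a Feynman--Kac type decomposition based on the elementary identity
\begin{equation*}
e^{-L(t)}=1-\int_{0}^{t}\omega(X_{s})e^{-(L(t)-L(s))}\,ds,
\end{equation*}
combined with the strong Markov property of $X$ and the known exit/resolvent formulas of Proposition \ref{prop:Rlevy}. Each quantity $Q(x)$ in the theorem will be shown to satisfy an integral equation of the form covered by Lemma \ref{lem:equation}, and then identified with the claimed expression by uniqueness.

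For (\ref{thm:1a}), set $f(x):=\Em_{x}[e^{-L(\kb)};\kb\leq\kc]$. Applying the identity above at $t=\kb$, using Fubini and then the strong Markov property (noting that conditional on $\mathcal{F}_s$ with $s<\kb\wedge\kc$ the post-$s$ process is a refracted SNLP restarted at $X_s$), one obtains
\begin{equation*}
f(x)=\Pm_{x}(\kb\leq\kc)-\int_{c}^{b}\omega(y)f(y)\,V(x,dy).
\end{equation*}
Substituting the explicit forms from Proposition \ref{prop:Rlevy} and using $w(x,y)=0$ for $y>x$ to collapse the upper limit of the $w(x,\cdot)$ integrand to $x$, this rearranges to
\begin{equation*}
f(x)-\int_{c}^{x}w(x,y)\omega(y)f(y)\,dy=\frac{w(x,c)}{w(b,c)}\alpha,\qquad \alpha:=1-\int_{c}^{b}w(b,y)\omega(y)f(y)\,dy.
\end{equation*}
The right-hand side is a constant multiple of $w(x,c)$, and the defining equation (\ref{defn:wor}) of $\wor(x,c)$ says exactly that $\wor(x,c)$ solves the same type of equation with right-hand side $w(x,c)$. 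Lemma \ref{lem:equation} therefore forces $f(x)=\alpha\,\wor(x,c)/w(b,c)$; plugging this back into the definition of $\alpha$ and using (\ref{defn:wor}) at $x=b$ pins down $\alpha=w(b,c)/\wor(b,c)$, yielding (\ref{thm:1a}).

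The downward exit identity is obtained along the same lines. Letting $g(x):=\Em_{x}[e^{-L(\kc)};\kc\leq\kb]$ and repeating the Feynman--Kac step, with $\Pm_x(\kc\leq\kb)=1-w(x,c)/w(b,c)$ in place of $\Pm_x(\kb\leq\kc)$, gives
\begin{equation*}
g(x)-\int_{c}^{x}w(x,y)\omega(y)g(y)\,dy=1-\frac{w(x,c)}{w(b,c)}\Bigl(1+\int_{c}^{b}w(b,y)\omega(y)g(y)\,dy\Bigr).
\end{equation*}
The candidate $\zor(x,c)-\tfrac{\wor(x,c)}{\wor(b,c)}\zor(b,c)$ is checked to satisfy this same equation by combining the two defining relations (\ref{defn:wor})--(\ref{defn:zor}) and evaluating a constant via (\ref{defn:wor}) and (\ref{defn:zor}) at $x=b$; uniqueness via Lemma \ref{lem:equation} finishes the identification.

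For the resolvent, the same decomposition applied to the semigroup $t\mapsto\Em_{x}[e^{-L(t)}h(X_{t});t<\kb\wedge\kc]$ integrated over $t$ yields
\begin{equation*}
\Ror h(x)+V[\omega\,\Ror h](x)=V h(x).
\end{equation*}
For $y\in(c,b)$ fixed, set $v(x,y):=\tfrac{\wor(x,c)}{\wor(b,c)}\wor(b,y)-\wor(x,y)$. Using (\ref{defn:wor}) once for $\wor(x,c)$ and once for $\wor(x,y)$, together with $\wor(z,y)=0$ for $z<y$, one verifies directly that as a function of $x$,
\begin{equation*}
v(x,y)-\int_{c}^{x}w(x,z)\omega(z)v(z,y)\,dz=\frac{w(x,c)}{\wor(b,c)}\wor(b,y)-w(x,y).
\end{equation*}
Plugging this in together with $v(b,y)=0$ and (\ref{defn:wor}) at $x=b$, a short calculation confirms that $\int_{c}^{b}v(x,y)h(y)\,dy$ satisfies the integral equation displayed above for $\Ror h(x)$; Lemma \ref{lem:equation} (applied in $x$ with $y$ as a parameter) then identifies the density.

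The technical hurdles are mostly bookkeeping: justifying Fubini and the Markov reduction rigorously, handling the boundary value $f(b)=1$ (which uses regularity of $X$ upwards at the upper boundary, inherited from $Y$), and keeping track that all integrands vanish for $y>x$ so that the global integrals become $\int_c^x$ and the resulting equations fit the hypothesis of Lemma \ref{lem:equation}. The main substantive point is the verification that the explicit candidates satisfy the induced integral equations; once this is arranged, uniqueness does the rest.
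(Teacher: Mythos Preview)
Your approach is correct and rests on the same Feynman--Kac decomposition, strong Markov reduction, and appeal to Proposition \ref{prop:Rlevy} as the paper. Two tactical differences are worth noting. First, for (\ref{thm:1a}) the paper does \emph{not} invoke Lemma \ref{lem:equation}; instead it uses the multiplicative identity $\mathcal{A}(x;z)=\mathcal{A}(x;y)\mathcal{A}(y;z)$ (absence of positive jumps plus Markov) to factor the integral equation and \emph{define} $\wor(x,y):=w(x,y)\big/\bigl(1-\int_y^x w(x,z)\omega(z)\mathcal{A}(z;x)\,dz\bigr)$, then checks that this object satisfies (\ref{defn:wor}). This has the bonus of supplying the existence half of Lemma \ref{lem:equation} as a byproduct, whereas your route treats Lemma \ref{lem:equation} as a black box and appeals to uniqueness --- logically clean, but it loses the constructive content. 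Second, for the downward exit the paper takes a shortcut: once the resolvent density is in hand, it evaluates $\Ror(\omega)(x)=1-\Em_x\bigl[e^{-L(\kb\wedge\kc)}\bigr]$ directly via (\ref{defn:zor:2}) and subtracts the upward piece, avoiding your second Feynman--Kac computation and the verification that the candidate satisfies a further integral equation. Both routes arrive at the same place with comparable effort.
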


The conclusions above are similar to the corresponding results on $\omega$-weighted occupation problem for  SNLP in \cite{BoLi:sub1}, where the auxiliary functions $(\wo,\zo)$ are defined as the unique solution,  respectively, to the following equations.
\begin{align}
\wo(x,y)=&\ W(x-y)+ \int_{y}^{x} W(x-z)\omega(z) \wo(z,y) dz,\label{defn:wo}\\
\zo(x,y)=&\ 1+ \int_{y}^{x} W(x-z)\omega(z) \zo(z,y) dz.\label{defn:zo}
\end{align}
Therefore, we  present the following relation between them,
which generalises \eqref{defn:wr:1} and \eqref{defn:wr:2}  since for  $\omega(\cdot)\equiv0$, $W^{(\omega)}(x,y)=W(x-y)$ by definition.
\begin{prop}\label{prop:wor:wo} For $x\geq y$, we have
\begin{equation}\label{eqn:wor:wo}
\begin{split}
\wor(x,y)=&\ \wo(x,y)+ \delta \int_{a}^{x} \wz^{(\omega)}(x,z)W^{(\omega)}(dz,y)\\
=&\ \wz^{(\omega)}(x,y)- \delta \int_{y-}^{a} \wz^{(\omega)}(x,z) W^{(\omega)}(dz,y)\\
\end{split}
\end{equation}
and
\begin{equation}\label{eqn:zor:zo}
\begin{split}
\zor(x,y)=&\ \zo(x,y)+ \delta \int_{a}^{x} \wz^{(\omega)}(x,z)Z^{(\omega)}(dz,y)\\
=&\ \zz^{(\omega)}(x,y)- \delta \int_{y}^{a} \wz^{(\omega)}(x,z) Z^{(\omega)}(dz,y).
\end{split}
\end{equation}
\end{prop}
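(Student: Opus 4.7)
The plan is as follows. By Lemma \ref{lem:equation} the function $\wor(\cdot,y)$ is the unique locally bounded solution to the Volterra equation \eqref{defn:wor}; thus it suffices to show that each of the two expressions on the right-hand side of \eqref{eqn:wor:wo} satisfies \eqref{defn:wor}. The same strategy, applied to \eqref{defn:zor}, will handle \eqref{eqn:zor:zo}.

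First I would establish the $\omega$-analog of the identity \eqref{eqn:wzw} connecting the scale functions of $Y$ and $Z$, namely
\[
\wz^{(\omega)}(x,y) = \wo(x,y) + \delta \int_{y-}^{x} \wz^{(\omega)}(x,z)\, W^{(\omega)}(dz,y).
\]
This immediately implies the equivalence of the two expressions in \eqref{eqn:wor:wo}, since adding $\delta \int_{a}^{x}\wz^{(\omega)}(x,z)\,W^{(\omega)}(dz,y)$ to one side produces the other. To prove this intermediate identity, I would verify that both sides solve the Volterra equation characterising $\wz^{(\omega)}$ (with kernel $\wz(x-z)$ in place of $w(x,z)$); for the right-hand side, I would substitute the defining equation \eqref{defn:wo} for $\wo$, invoke the classical \eqref{eqn:wzw} for $\wz$, and apply Fubini to collect the iterated Stieltjes integrals. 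Uniqueness follows from an analog of Lemma \ref{lem:equation} for the kernel $\wz$, proved by the same iteration argument.

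Next, to show that $F(x,y) := \wo(x,y) + \delta \int_a^x \wz^{(\omega)}(x,z)\, W^{(\omega)}(dz,y)$ satisfies \eqref{defn:wor}, I would substitute \eqref{defn:wo} for the $\wo(x,y)$ term and the integral equation defining $\wz^{(\omega)}$ for the $\wz^{(\omega)}(x,z)$ term, exchange the order of integration in the resulting double integrals, and regroup using the first form \eqref{defn:wr:1} of $w(x,z)$. This produces
\[
F(x,y) = w(x,y) + \int_y^x w(x,z)\,\omega(z)\,F(z,y)\,dz,
\]
which is precisely \eqref{defn:wor}, so Lemma \ref{lem:equation} gives $F = \wor$. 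The proof of \eqref{eqn:zor:zo} is completely analogous: replace $(\wo, W^{(\omega)})$ by $(\zo, Z^{(\omega)})$ throughout, use \eqref{defn:zo} in place of \eqref{defn:wo}, and invoke the corresponding $\omega$-analog of \eqref{eqn:wzw} for $Z^{(\omega)}$ and $\zz^{(\omega)}$.

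The main obstacle I expect is the bookkeeping of the nested Stieltjes integrals. The measures $W(dz-y)$ and $W^{(\omega)}(dz,y)$ may charge the point $\{y\}$, so one must track carefully whether each integral runs over $(y,x]$, $[y,x]$, or $[y-,x]$, consistent with the convention stated in the Remark following \eqref{eqn:wzw}; in addition the two pieces of the piecewise description of $w(x,y)$ for $y \le a$ versus $y > a$ must be matched correctly when reassembling the kernel. Once these conventions are fixed, the remaining steps reduce to iterated Fubini exchanges and substitutions of the defining integral equations.
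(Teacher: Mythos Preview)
Your proposal is correct and follows essentially the same strategy as the paper: define the candidate right-hand side, verify that it satisfies the Volterra equation \eqref{defn:wor}, and conclude by the uniqueness in Lemma~\ref{lem:equation}. The paper differs only in bookkeeping---it expands $\int_y^x w(x,z)\omega(z)g^{(\omega)}(z,y)\,dz$ using the second representation \eqref{defn:wr:2} of $w$ (rather than substituting into $F$ and regrouping via \eqref{defn:wr:1}), treats the two forms in \eqref{eqn:wor:wo} separately instead of proving your intermediate $\omega$-analog of \eqref{eqn:wzw}, and deduces \eqref{eqn:zor:zo} from \eqref{eqn:wor:wo} via the identity $\zor(x,y)=1+\int_y^x\wor(x,z)\omega(z)\,dz$ of Remark~\ref{rmk:defn:zor} rather than repeating the argument.
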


From \eqref{defn:wo} and \eqref{defn:zo}, for every $y\in\mathbb{R}$, it can be shown  that $\zo(\cdot, y)$ and $\wo(\cdot,y)$ are increasing functions.
The associated Stieltjes measures satisfy  respectively,
\begin{equation}\label{eqn:wzo:stj}
\left\{\begin{split}
\wo(dx,y)=&\ W(dx-y)+ \int_{\mathbb{R}} W(dx-z)\omega(z)\wo(z,y) dz,\\
\zo(dx,y)=&\ \int_{\mathbb{R}} W(dx-z)\omega(z) \wo(z,y) dz,
\end{split}\right..
\end{equation}
with $\wo(\{y\},y)=W(0)$ and $\zo(\{y\},y)=0$,
noticing that $0=W(u-v)=\wo(u,v)$ for $u<v$.
At the refraction point $a$, we have
$\wor(a,a)=W(0)=\wz(0)(1-\delta W(0))$ and
\begin{itemize}
\item for  $a>x$, $\wor(x,y)=\wo(x,y)$ and $\zor(x,y)=\zo(x,y)$,
\item for $y>a$, $\wor(x,y)=\wzo(x,y)$ and $\zor(x,y)=\zz^{(\omega)}(x,y)$,
\item for $x=a> y$, $\wor(a,y)= \wo(a,y)$ and $\zor(a,y)= \zo(a,y)$,
\item for $x> a=y$, $\wor(x,a)=\wzo(x,a)(1-\delta W(0))$,
$\zor(x,a)= \zz^{(\omega)}(x,a)$.
\end{itemize}
In particular, $\wor(x,y)$ is continuous at $(a,a)$ if and only if $W(0)=0$.

We also have the following scale function identities similar to those associated to $(W^{(q)},Z^{(q)})$.

\begin{prop}\label{prop:idens:2} Let $(w^{(\omega_{1})},z^{(\omega_{1})})$ and $(w^{(\omega_{2})},z^{(\omega_{2})})$ be the generalized scale functions with respect to weight functions $\omega_{1}(\cdot)\geq0$ and $\omega_{2}(\cdot)\geq0$, respectively.
Then for $x,y\in\mathbb{R}$
\begin{align}
w^{(\omega_{2})}(x,y)-w^{(\omega_{1})}(x,y)=&\ \int_{y}^{x} w^{(\omega_{1})}(x,z)(\omega_{2}(z)-\omega_{1}(z))w^{(\omega_{2})}(z,y) dz
\end{align}
and
\begin{align}
z^{(\omega_{2})}(x,y)-z^{(\omega_{1})}(x,y)=&\ \int_{y}^{x} w^{(\omega_{1})}(x,z)(\omega_{2}(z)-\omega_{1}(z))z^{(\omega_{2})}(z,y) dz.
\end{align}
\end{prop}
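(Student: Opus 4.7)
The plan is to reduce both identities to an application of the uniqueness part of Lemma \ref{lem:equation}. Fix $y \in \mathbb{R}$ and set $D(x,y) := w^{(\omega_2)}(x,y) - w^{(\omega_1)}(x,y)$. Subtracting the defining equation \eqref{defn:wor} for $w^{(\omega_1)}$ from that for $w^{(\omega_2)}$ and using the algebraic identity
\[
\omega_2(z)w^{(\omega_2)}(z,y) - \omega_1(z)w^{(\omega_1)}(z,y) = (\omega_2(z)-\omega_1(z))w^{(\omega_2)}(z,y) + \omega_1(z)D(z,y),
\]
I obtain
\[
D(x,y) = \int_y^x w(x,z)(\omega_2(z)-\omega_1(z))w^{(\omega_2)}(z,y)\,dz + \int_y^x w(x,z)\omega_1(z)D(z,y)\,dz.
\]
This is an instance of equation \eqref{eqn:lem} with weight $\omega_1$ and inhomogeneous term $h(x,y) = \int_y^x w(x,z)(\omega_2(z)-\omega_1(z))w^{(\omega_2)}(z,y)\,dz$, so by Lemma \ref{lem:equation} the function $D(\cdot,y)$ is the unique locally bounded solution.

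Next I would verify that the candidate expression
\[
F(x,y) := \int_y^x w^{(\omega_1)}(x,z)(\omega_2(z)-\omega_1(z))w^{(\omega_2)}(z,y)\,dz
\]
satisfies the very same equation. Treating $y$ as fixed and setting $f(z) := (\omega_2(z)-\omega_1(z))w^{(\omega_2)}(z,y)$, this is exactly the object $(w^{(\omega_1)} f)(x,y)$ from Remark \ref{rmk:equation}. Applying that remark (separately to the positive and negative parts of $f$, then using linearity, since $f$ is locally bounded but possibly signed) yields
\[
F(x,y) = \int_y^x w(x,z)f(z)\,dz + \int_y^x w(x,z)\omega_1(z)F(z,y)\,dz,
\]
which coincides with the integral equation satisfied by $D(\cdot,y)$. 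The uniqueness statement of Lemma \ref{lem:equation} then forces $D(x,y) = F(x,y)$, which is the first identity.

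The second identity is obtained by the identical scheme: set $E(x,y) := z^{(\omega_2)}(x,y) - z^{(\omega_1)}(x,y)$, subtract the two instances of \eqref{defn:zor}, and observe that $E(\cdot, y)$ and $\int_y^x w^{(\omega_1)}(x,z)(\omega_2(z)-\omega_1(z))z^{(\omega_2)}(z,y)\,dz$ both solve \eqref{eqn:lem} with weight $\omega_1$ and inhomogeneous term $\int_y^x w(x,z)(\omega_2(z)-\omega_1(z))z^{(\omega_2)}(z,y)\,dz$; uniqueness concludes.

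The only delicate point, and the main obstacle, is the application of Remark \ref{rmk:equation} to the signed integrand $(\omega_2-\omega_1)w^{(\omega_2)}$, since that remark is stated for nonnegative $f$. One should either invoke the remark on $(\omega_2-\omega_1)^{\pm}w^{(\omega_2)}$ separately and subtract, or observe that the Fubini computation underlying the remark only requires local integrability of $f$, which holds here since $\omega_i$ and $w^{(\omega_2)}(\cdot,y)$ are locally bounded. Otherwise everything follows mechanically from Lemma \ref{lem:equation} and Remark \ref{rmk:equation}.
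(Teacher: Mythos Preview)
Your argument is correct. It differs from the paper's route in one structural respect worth noting.

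The paper does not subtract the two defining equations. Instead it first proves an auxiliary ``transposed'' identity
\[
w^{(\omega)}(x,y)=w(x,y)+\int_{y}^{x} w^{(\omega)}(x,z)\,\omega(z)\,w(z,y)\,dz
\]
(equation \eqref{defn:wor:2}), established by checking that the right-hand side satisfies \eqref{defn:wor} and appealing to uniqueness. With this in hand, the paper expands $w^{(\omega_{1})}(x,z)$ inside $\int w^{(\omega_{1})}(x,z)\,\omega_{2}(z)\,w^{(\omega_{2})}(z,y)\,dz$ via \eqref{defn:wor:2}, applies Fubini and the defining equation for $w^{(\omega_{2})}$ twice, and reads off the identity directly---no second appeal to uniqueness is needed at that stage.

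Your approach is more economical: you recognize immediately that both $D(\cdot,y)$ and $F(\cdot,y)$ satisfy the same instance of \eqref{eqn:lem} with weight $\omega_{1}$, and invoke uniqueness once. This bypasses the transposed identity entirely. The paper's route, on the other hand, produces \eqref{defn:wor:2} as a byproduct; that identity is of independent interest (it is the analogue for the refracted process of the symmetry $W^{(\omega)}(x,y)=W(x-y)+\int W^{(\omega)}(x,z)\omega(z)W(z-y)\,dz$ for SNLP) and is implicitly used elsewhere, e.g.\ in \eqref{eqn:wzor:q}. Your handling of the sign issue in applying Remark~\ref{rmk:equation} is fine: the underlying Fubini step only needs local boundedness, and splitting into positive and negative parts is a safe formal workaround.
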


\begin{cor}[First hitting time]\label{cor:1}
 For any $d\in(c,b)$, let $\ke:=\inf\{t>0, X_{t}=d\}$ be the first hitting time. We have for $x\in[c,b]$
\begin{equation}
\Em_{x}\left[e^{-L(\ke)}; \ke\leq \kb\wedge\kc\right]
=\frac{\wor(x,c)}{\wor(d,c)}- \frac{\wor(x,d)}{\wor(b,d)}\frac{\wor(b,c)}{\wor(d,c)}.
\end{equation}
\end{cor}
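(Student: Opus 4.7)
The plan is to reduce the claim to the two-sided exit identity of Theorem \ref{thm:1} via a strong Markov decomposition at the first hitting time $\ke$. For any $x \in [c,b]$, I would split
\[
\Em_{x}\!\left[e^{-L(\kb)}; \kb \leq \kc\right] = \Em_{x}\!\left[e^{-L(\kb)}; \ke \leq \kb \leq \kc\right] + \Em_{x}\!\left[e^{-L(\kb)}; \kb < \ke,\ \kb \leq \kc\right].
\]
The first term, by the strong Markov property at $\ke$ and the fact that $X_{\ke}=d$, equals $\Em_{x}[e^{-L(\ke)}; \ke \leq \kb \wedge \kc]\cdot \Em_{d}[e^{-L(\kb)}; \kb \leq \kc]$.

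Next I would identify the second term with $\wor(x,d)/\wor(b,d)$. For $x \leq d$ it vanishes, since the spectral negativity of $X$ forces $\ke \leq \kb$ on $\{\kb<\infty\}$, and correspondingly $\wor(x,d)=0$ because $w(x,d)=0$ for $x<d$. For $x>d$ the key observation is the a.s.\ equality $\{\kb < \ke,\ \kb \leq \kc\} = \{\kb \leq \kappa_{d}^{-}\}$. The inclusion $\subseteq$ is immediate: if $X$ reaches $b$ without touching $d$ then spectral negativity prevents $X$ from having gone strictly below $d$ (any return from below $d$ up to $b$ would continuously cross $d$ upward, hence hit it), so $X_t > d > c$ throughout $[0,\kb]$ and $\kb\leq\kc$ is automatic. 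The reverse inclusion relies on the local behavior of $X$ at $d$ under hypothesis \textbf{(H)}: in the regular-from-above regime (Gaussian part or unbounded variation) any touch of $d$ instantly triggers $\kappa_{d}^{-}$, while in the bounded-variation case $X$ cannot hit $d$ from strictly above except by a jump landing exactly at $d$, an event of probability zero. Theorem \ref{thm:1} applied with lower boundary $d$ then delivers $\Em_{x}[e^{-L(\kb)}; \kb \leq \kappa_{d}^{-}] = \wor(x,d)/\wor(b,d)$.

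Substituting the identities from Theorem \ref{thm:1} on $[c,b]$ turns the decomposition into
\[
\frac{\wor(x,c)}{\wor(b,c)} = \Em_{x}\!\left[e^{-L(\ke)}; \ke \leq \kb \wedge \kc\right]\cdot \frac{\wor(d,c)}{\wor(b,c)} + \frac{\wor(x,d)}{\wor(b,d)},
\]
and solving for the unknown Laplace transform yields the announced formula.

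The main obstacle I anticipate is precisely the a.s.\ equality $\{\kb < \ke\} = \{\kb \leq \kappa_{d}^{-}\}$ for $x>d$. It is intuitively clear from the spectral negativity of $X$, but a clean uniform justification must simultaneously handle the creeping regime (where any touch of $d$ coincides with strict passage below) and the bounded-variation regime (where $d$ is simply not hit from strictly above); once this identification is in place the rest reduces to the strong Markov property and straightforward algebra using Theorem \ref{thm:1}.
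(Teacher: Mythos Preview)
Your proposal is correct and follows essentially the same route as the paper's proof: decompose $\Em_{x}[e^{-L(\kb)};\kb\leq\kc]$ according to whether level $d$ is visited before $\kb$, identify the ``no visit'' piece with the two-sided exit problem on $[d,b]$, apply the strong Markov property at $\ke$ to the other piece, and solve the resulting linear equation using Theorem~\ref{thm:1}. The only cosmetic difference is that the paper splits on $\{\kb\leq\kappa_d^-\}$ versus $\{\kappa_d^-\leq\kb\}$ and invokes the a.s.\ equality $\{\kappa_d^-\leq\kb<\infty\}=\{\ke\leq\kb<\infty\}$ in one line (``due to absence of positive jumps''), whereas you split on $\ke$ and then argue $\{\kb<\ke,\ \kb\leq\kc\}=\{\kb\leq\kappa_d^-\}$; these are complementary formulations of the same identification, and your regularity/bounded-variation case analysis actually supplies the justification the paper leaves implicit.
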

\begin{proof}[Proof of Corollary \ref{cor:1}] Observing that due to  absence of positive jumps,
 \[ \{\kappa_{d}^{-}\leq \kb<\infty\}=\{\ke\leq \kb<\infty\} \,\,\, \Pm_{x}\text{-a.s.}\]
  Therefore, for $x\in[c,b]$
\begin{align*}
&\ \Em_{x}\left[e^{-L(\kb)}; \kb\leq \kc\right]\\
=&\ \Em_{x}\left[e^{-L(\kb)}; \kb\leq \kc, \kb\leq \kappa_{d}^{-}\right]+
\Em_{x}\left[e^{-L(\kb)}; \kb\leq \kc, \kappa_{d}^{-}\leq \kb\right]\\
=&\ \Em_{x}\left[e^{-L(\kb)}; \kb\leq  \kappa_{d}^{-}\right]+ \Em_{x}\left[e^{-L(\kb)}; \ke\leq \kb\leq \kc\right]\\
=&\ \Em_{x}\left[e^{-L(\kb)}; \kb\leq  \kappa_{d}^{-}\right]+ \Em_{x}\left[e^{-L(\ke)}; \ke\leq \kb\wedge\kc\right] \Em_{d}\left[e^{-L(\kb)}; \kb\leq \kc\right].
\end{align*}
The desired identity then follows from the Markov property and Theorem \ref{thm:1}.
\end{proof}

Indeed, given the resolvent measure, appealing to compensation formula
from \cite[O.5]{Bertoin96:book}
we have the joint distribution:
\[\Em_{x}\left(e^{-L(\kc)}; \kc\leq \kb, X(\kc-)\in dy, X(\kc)\in dz\right)
=\Ror(x,dy) (-\Pi(y-dz))\] for $b>y>c\geq z$.
Since $\{x, \Pi(\{x\})>0\}$ is at most countable
and $V(x,dy)$ is a continuous measure,
the creeping can not be caused by a jump.
Therefore, similar to the case of a L\'evy process,
a refracted SNLP creeps downward at a lower level with positive probability
only if it has a nontrivial Gaussian part.
We  restrict ourselves to $\sigma>0$ in the following corollary. $W(\cdot)$ and $\wor(x,x-\cdot)$ are thus continuously differentiable on $(0,\infty)$ from \cite[Theorem 1]{Chan2011:scalefunction:smooth}, $W(0)=0$ and $W'(0+)={2}/{\sigma^{2}}$.

\begin{cor}[Creeping]\label{cor:2} For $d<x<b$ and $d\leq a$ we have
\begin{equation}
\Em_{x}\left[e^{-L(\kappa^{-}_{d})}; X(\kappa^{-}_{d})=d, \kappa^{-}_{d}\leq \kb\right]
= \frac{\sigma^{2}}{2} \left(\frac{\wor(x,d)}{\wor(b,d)} \partial_{y} \wor(b,d)- \partial_{y}\wor(x,d)\right),
\end{equation}
where for $x>d$,
 $$\partial_{y}\wor(x,d):=\left(\lim_{z\to d} \frac{\wor(x,z)-\wor(x,d)}{z-d}\middle)\right|_{y=d}.$$
\end{cor}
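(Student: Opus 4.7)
The plan is to decompose the exit event $\{\kappa_d^-\le\kb\}$ into its creeping part $\{X(\kappa_d^-)=d\}$ and its jumping part $\{X(\kappa_d^-)<d\}$, extract the jumping contribution through the compensation-formula identity displayed just before the corollary, and obtain the creeping contribution by subtraction from the total exit expectation supplied by Theorem \ref{thm:1}.

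First I would specialize Theorem \ref{thm:1} with $c=d$ to read off
\[
\Em_x\!\bigl[e^{-L(\kappa_d^-)};\,\kappa_d^-\le\kb\bigr]=\zor(x,d)-\frac{\wor(x,d)}{\wor(b,d)}\zor(b,d),
\]
together with the resolvent density $v^{(\omega)}(x,y):=\tfrac{\wor(x,d)}{\wor(b,d)}\wor(b,y)-\wor(x,y)$ on $(d,b)$. Integrating the displayed joint law $\Ror(x,dy)(-\Pi(y-dz))$ over $y\in(d,b)$ and $z<d$ then yields
\[
\Em_x\!\bigl[e^{-L(\kappa_d^-)};\,X(\kappa_d^-)<d,\,\kappa_d^-\le\kb\bigr]=\int_d^b v^{(\omega)}(x,y)\,\overline{\Pi}(y-d)\,dy,
\]
with $\overline{\Pi}(r):=\Pi((r,\infty))$. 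Since $X$ has no positive jumps, the exit event splits as the creeping event together with the jumping event, so subtracting the two displays produces the creeping expectation.

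Using $\wor(x,y)=0$ for $y>x$, the subtraction collapses into the form
\[
G(x)-\frac{\wor(x,d)}{\wor(b,d)}\,G(b),\qquad G(u):=\zor(u,d)+\int_d^{u}\wor(u,y)\,\overline{\Pi}(y-d)\,dy.
\]
Matching this against the claimed formula forces the scale-function identity
\[
G(u)=-\tfrac{\sigma^2}{2}\,\partial_y\wor(u,d)+C(d)\,\wor(u,d)
\]
for some $C(d)$ independent of $u$: any term proportional to $\wor(u,d)$ cancels in the combination $G(x)-\tfrac{\wor(x,d)}{\wor(b,d)}G(b)$, so only the derivative piece survives, giving exactly $\tfrac{\sigma^{2}}{2}\bigl(\tfrac{\wor(x,d)}{\wor(b,d)}\partial_y\wor(b,d)-\partial_y\wor(x,d)\bigr)$.

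I expect this last identity to be the main obstacle. My strategy is to establish first its classical counterpart
\[
Z^{(q)}(v)+\int_0^{v}W^{(q)}(v-y)\,\overline{\Pi}(y)\,dy=\tfrac{\sigma^{2}}{2}W^{(q)\prime}(v)+\bigl[\gamma+\textstyle\int_{(0,1)}u\,\Pi(du)\bigr]W^{(q)}(v)
\]
by matching Laplace transforms through the L\'evy--Khintchine expansion of $\psi(s)/s$, using $W(0)=0$ (which holds because $\sigma>0$). I would then lift it to $(\wor,\zor)$ via Proposition \ref{prop:wor:wo}, the parallel identity for the scale functions of $Z$, and the integral equations \eqref{defn:wor}--\eqref{defn:zor}. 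Under the hypothesis $d\le a$, and with $\sigma>0$, the smoothness $W\in C^{1}((0,\infty))$ and $W'(0+)=2/\sigma^{2}$ from \cite{Chan2011:scalefunction:smooth} justify differentiating \eqref{defn:wor}--\eqref{defn:wr:1} in the second argument at $y=d$; the Stieltjes measure $W(dz-y)$ concentrates the $W'(0+)$ mass near the diagonal, which is precisely what produces the $\sigma^{2}/2$ prefactor in the final formula.
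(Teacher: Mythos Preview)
Your compensation-formula route is genuinely different from the paper's and, as written, has a gap at exactly the point you flag as the main obstacle. The paper bypasses any new scale-function identity by going through Corollary~\ref{cor:1}: because $X$ has no positive jumps, the creeping event $\{X(\kappa_d^-)=d,\ \kappa_d^-\le\kb\}$ is precisely the limit as $c\uparrow d$ of the hitting-time event $\{\kappa^{\{d\}}\le\kb\wedge\kc\}$ (if $X$ jumps strictly below $d$ it also jumps below $c$ for $c$ close enough, while if it creeps then $\kappa^{\{d\}}=\kappa_d^-\le\kc$ for every $c<d$). Dominated convergence then reduces the whole proof to two elementary limits: $\wor(d,c)=W'(0+)(d-c)+o(d-c)$, which follows from $\wor(d,c)=\wo(d,c)$ for $d\le a$ together with $W(0)=0$ and $W'(0+)=2/\sigma^2$, and the very definition of $\partial_y\wor$. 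Nothing beyond the defining integral equations is needed.

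Your proposed identity
\[
Z^{(q)}(v)+\int_0^{v}W^{(q)}(v-y)\,\overline{\Pi}(y)\,dy=\tfrac{\sigma^{2}}{2}W^{(q)\prime}(v)+\Bigl[\gamma+\int_{(0,1)}u\,\Pi(du)\Bigr]W^{(q)}(v)
\]
fails in general: since $\sigma>0$ forces unbounded variation, nothing rules out $\int_{(0,1)}u\,\Pi(du)=\infty$, in which case $\int_0^{\epsilon}\overline{\Pi}(y)\,dy=\infty$ and both sides diverge. The jump expectation $\int_d^b v^{(\omega)}(x,y)\,\overline{\Pi}(y-d)\,dy$ stays finite only because the resolvent density vanishes at $y=d$; once you split it as $G(x)-\tfrac{\wor(x,d)}{\wor(b,d)}G(b)$ with $G(u)=\zor(u,d)+\int_d^{u}\wor(u,y)\,\overline{\Pi}(y-d)\,dy$, each term can be infinite and the subtraction is illegitimate. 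The strategy might be salvageable by keeping the combined density and working with a regularized tail, but as stated it only covers $\int_{(0,1)}u\,\Pi(du)<\infty$, and even then the lift to $(\wor,\zor)$ via Proposition~\ref{prop:wor:wo} is far more laborious than the paper's limiting argument.
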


\begin{proof}[Proof of Corollary \ref{cor:2}]
Similar to the discussion in \cite[Theorem 7]{Kyprianou2010:Rlevy}, the desired result is derived by letting $c\to d-$ in Corollary \ref{cor:1}.
From the right continuity of $X$, we have
\begin{align*}
&\  \Em_{x}\left[e^{-L(\kappa^{-}_{d})}; X(\kappa^{-}_{d})=d, \kappa^{-}_{d}\leq \kb\right]
 =\lim_{c\to d-} \Em_{x}\left[e^{-L(\ke)}; \ke\leq \kb\wedge\kc\right]\\
 = &\ \lim_{c\to d-} \frac{d-c}{\wor(d,c)} \frac{1}{d-c}\left( \wor(x,c)- \wor(x,d) \frac{\wor(b,c)}{\wor(b,d)}\right).
\end{align*}
On the other hand, from the discussion after Proposition \ref{prop:wor:wo}, it follows that   for $c<d\leq a$,
\[
\wor(d,c)=\wo(d,c)=W(d-c)+ \int_{c}^{d} W(d-z)\omega(z)\wo(z,c) dz.
\]
The mean value theorem yields $\wor(d,c)=W'(0+) (d-c)+ o(d-c)$. And
\begin{align*}
&\ \frac{1}{d-c}\left(\wor(x,c)- \wor(x,d)\frac{\wor(b,c)}{\wor(b,d)} \right)\\
=&\ \frac{\wor(x,c)\wor(b,d)- \wor(x,d) \wor(b,c)}{(d-c)\wor(b,d)}
\to \left(\wor(x,d) \frac{\partial_{y} \wor(b,d)}{\wor(b,d)}- \partial_{y}\wor(x,d)\right),
\end{align*}
by letting $c\to d-$. This finishes the proof.
\end{proof}

Noting that the right hand side of the formula is exactly the $y$-derivative of the density of $\Ror(x,dy)$ at $(x,y)$, which coincides with conclusion of  SNLP without refraction.

\section{Examples}\label{sec:4}
The refracted SNLP has been well-defined in \cite{Kyprianou2010:Rlevy},
but there are  not  many results yet on its occupation times as far as we know.
In this section, we apply our results to some simpler examples of $\omega$ to reproduce the existing results in \cite{Kyprianou2010:Rlevy, Kyprianou2013:occupationtime:Rlevy, Renaud2014:Rlevy} for which we essentially need to find expressions for $(\wor,\zor)$.

\begin{exam}\label{exam:1} Firstly, for the case $\omega(z)\equiv q\geq0$, we want to find $(\wor,\zor)$ such that
\[
\wor(x,y)=  w(x,y)+ q \int_{y}^{x} w(x,z)\wor(z,y) dz
\]
and
\[
\zor(x,y) = 1+ q \int_{y}^{x} w(x,z)\zor(z,y) dz.
\]
Actually, $\wo(x,y)=W^{(q)}(x-y)$ and $\zo(x,y)=Z^{(q)}(x-y)$, see \cite{BoLi:sub1} and we have
\[
\wor(x,y)= \wqr(x,y):= W^{(q)}(x-y)+ \delta \int_{a}^{x} \wz^{(q)}(x-z) W^{(q)}(dz-y),
\]
and
\[
\zor(x,y)= \zqr(x,y):= Z^{(q)}(x-y)+ \delta q \int_{a}^{x}
\wz^{(q)}(x-z) W^{(q)}(z-y) dz,
\]
by Proposition \ref{prop:wor:wo},
where $\wqr$ and $\zqr$ coincide with the functions of order $q$ used in  \cite[Theorem 4 and 6]{Kyprianou2010:Rlevy} and introduced in \cite{Renaud2014:Rlevy}.
\end{exam}

With the $(\wqr,\zqr)$ from above, applying Proposition \ref{prop:idens:2},
we have more identities as follows.
\begin{equation}\label{eqn:wzor:q}
\left\{
\begin{split}
\wor(x,y)=&\ \wqr(x,y)+ \int_{y}^{x}\wqr(x,z)(\omega(z)-q)\wor(z,y) dz\\
=&\ \wqr(x,y)+ \int_{y}^{x} \wor(x,z)(\omega(z)-q)\wqr(z,y) dz\\
\zor(x,y)=&\ \zqr(x,y)+ \int_{y}^{x}\wqr(x,z)(\omega(z)-q)\zor(z,y) dz\\
=&\ \zqr(x,y)+ \int_{y}^{x}\wor(x,z)(\omega(z)-q)\zqr(z,y) dz
\end{split}\right.
\quad\text{for $x,y\in\mathbb{R}$,}
\end{equation}
by taking $(\omega_{2},\omega_{1})=(\omega,q)$
and $(\omega_{2},\omega_{1})=(q,\omega)$ in the equations respectively.

In particular, for the case $\omega(x)=p+ (q-p) \boldsymbol{1}_{\{x<a\}}$, which is the function  considered in \cite{Kyprianou2013:occupationtime:Rlevy,Renaud2014:Rlevy},
we have for $x,y\in\mathbb{R}$
\begin{equation}\label{eqn:wqp}
\left\{
\begin{split}
\wor(x,y)
=&\ \wqr(x,y)- (q-p) \int_{a}^{x} \wor(x,z)\wqr(z,y)\,dz\\
=&\ \wqr(x,y)- (q-p) \int_{a}^{x} \wz^{(p)}(x-z)\wqr(z,y)\,dz,\\
\zor(x,c)
=&\ \zqr(x,c)- (q-p) \int_{a}^{x}\wz^{(p)}(x-z)\zqr(z,c)\,dz,
\end{split}\right.
\end{equation}
where the facts $q-\omega(z)=(q-p) \boldsymbol{1}_{\{z\geq a\}}$, $\omega(z)=p$ for $z\geq a$ and $\wor(x,z)=\wzo(x,z)=\wz^{(p)}(x-z)$ for $z\geq a$ after Proposition \ref{prop:wor:wo} are used,
and which gives \cite[Theorem 3]{Renaud2014:Rlevy}.

By letting the boundaries go to infinity in Theorem \ref{thm:1}, one
can solve the one-sided exit problems in principle. However, it is
not easy to carry it out when the behaviors of $\omega(\cdot)$
near infinity are arbitrary. Here, for a simpler case that
$\omega(x)$ is a constant for $|x|>M_{0}$ for some $M_{0}>0$, we
could work out the following Proposition.

\begin{prop}\label{prop:limits}
If $\omega(z)=p$ for all $z>M_{1}$ for some $p>0$ and $M_{1}\in\mathbb{R}$, we have as $x\to\infty$
\[
\frac{e^{\varphi(p)(y-x)}\wor(x,y)}{\varphi'(p)}\to
1
+ \int_{y}^{\infty} (\omega(u)-p)\wor(u,y)e^{\varphi(p)(y-u)}\,du
- \delta \int_{y-}^{a}e^{\varphi(p)(y-u)} \wo(du,y),
\]
and
\[
\frac{e^{\varphi(p)(y-x)}\zor(x,y)}{\varphi'(p)}\to
\frac{p}{\varphi(p)}+ \int_{y}^{\infty}
(\omega(u)-p)\zor(u,y)e^{\varphi(p)(y-u)}\,du - \delta \int_{y}^{a}
e^{\varphi(p)(y-u)} \zo(du,y).
\]
Similarly, if $\omega(z)=q$ for all $z<M_{2}$ for some $q>0$ and
$M_{2}\in\mathbb{R}$, we have as $y\to -\infty$
\begin{align*}
&\ \lim_{y\to-\infty}\frac{e^{\Phi(q)(y-x)} \wor(x,y)}{\Phi'(q)}
= \lim_{y\to-\infty} \frac{e^{\Phi(q)(y-x)} \zor(x,y)}{\Phi'(q)} \frac{\Phi(q)}{q}\\
=&\ 1+
\int_{\mathbb{R}} \wor(x,u)(\omega(u)-q) e^{\Phi(q)(u-x)}\,du
+ \delta \Phi(q)\int_{a}^{x} e^{\Phi(q)(u-x)} \wzo(x,u)\,du.
\end{align*}
\end{prop}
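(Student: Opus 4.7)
The plan is to combine Proposition \ref{prop:wor:wo} with the analogue of Proposition \ref{prop:idens:2} for the $\omega$-scale functions $(\wo,\zo)$ and $(\wzo,\zz^{(\omega)})$ (obtained by applying the same argument to $Y$ and $Z$ as SNLPs in their own right), thereby reducing the asymptotics of $\wor,\zor$ to those of the classical scale functions $W^{(q)},Z^{(q)},\wz^{(p)},\zz^{(p)}$ recorded in \eqref{eqn:limit:wz}. The common mechanism is that the hypothesis on $\omega$ makes $\omega-p$ (respectively $\omega-q$) vanish outside a bounded set, so every integrand to be passed to the limit is supported on a compact $z$-interval and dominated convergence applies, the envelopes coming from the exponential asymptotics of the classical scale functions.

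For the $x\to\infty$ statement I would first study $\wzo$ in isolation. The $Z$-side perturbation identity reads
\[
\wzo(x,y) = \wz^{(p)}(x-y) + \int_y^x \wz^{(p)}(x-z)(\omega(z)-p)\wzo(z,y)\,dz,
\]
whose integrand is supported on $z\in[y,M_1]$. Multiplying by $e^{\varphi(p)(y-x)}/\varphi'(p)$ and using $e^{-\varphi(p)u}\wz^{(p)}(u)\to\varphi'(p)$ yields, by dominated convergence,
\[
\lim_{x\to\infty}\frac{e^{\varphi(p)(y-x)}\wzo(x,y)}{\varphi'(p)} = 1+\int_y^\infty(\omega(u)-p)\wzo(u,y)e^{\varphi(p)(y-u)}\,du.
\]
I would then plug this into the second form of Proposition \ref{prop:wor:wo},
\[
\wor(x,y) = \wzo(x,y) - \delta\int_{y-}^a \wzo(x,z)\wo(dz,y),
\]
pass to the $x\to\infty$ limit term by term (at $y$ and at each fixed $z\leq a$), swap the resulting double integral by Fubini, and collapse the inner bracket by recognizing it as $\wor(u,y) = \wzo(u,y) - \delta\int_{y-}^{u\wedge a}\wzo(u,z)\wo(dz,y)$, using $\wzo(u,z)=0$ for $z>u$. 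The identical recipe with $\zz^{(\omega)}$ in place of $\wzo$ (whose leading asymptotic is $p/\varphi(p)$, from $\zz^{(p)}/\wz^{(p)}\to p/\varphi(p)$) yields the corresponding formula for $\zor$.

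For the $y\to-\infty$ statement with $x$ fixed I would instead use the second form of \eqref{eqn:wzor:q},
\[
\wor(x,y) = \wqr(x,y) + \int_y^x \wor(x,z)(\omega(z)-q)\wqr(z,y)\,dz,
\]
whose integrand is supported on $z\in[M_2,x]$. The closed form for $\wqr$ from Example \ref{exam:1} together with $e^{-\Phi(q)u}W^{(q)}(u)\to\Phi'(q)$ leads, by a direct Stieltjes calculation, to
\[
\lim_{y\to-\infty}\frac{e^{\Phi(q)(y-z)}\wqr(z,y)}{\Phi'(q)} = 1+\delta\Phi(q)\int_a^{z\vee a} e^{\Phi(q)(u-z)}\wz^{(q)}(z-u)\,du.
\]
Substituting this and swapping orders of integration produces a formula in which $\wz^{(q)}$ appears under the refraction integral; to rewrite it with $\wzo$, I would invoke the $Z$-side perturbation $\wzo(x,u) = \wz^{(q)}(x-u) + \int_u^x \wzo(x,z)(\omega(z)-q)\wz^{(q)}(z-u)\,dz$ together with the key fact, from the case analysis after Proposition \ref{prop:wor:wo}, that $\wor(x,z)=\wzo(x,z)$ for every $z>a$. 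The $\zor$ limit follows from the companion equation in \eqref{eqn:wzor:q} and the ratio $\zqr(z,y)/\wqr(z,y)\to q/\Phi(q)$ (inherited from $Z^{(q)}/W^{(q)}\to q/\Phi(q)$), which factors $q/\Phi(q)$ cleanly out of the $\wor$-limit.

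The main difficulty will be the dominated convergence and Fubini justifications, which require uniform exponential envelopes of the form $\wzo(x,y)\leq Ce^{\varphi(p)(x-y)}$ on bounded $y$-ranges in Part 1 (and the analogous $\wo$-bound in Part 2). The subtler conceptual step is in Part 2, where the direct perturbation produces $\wz^{(q)}$ rather than $\wzo$ under the refraction integral, and recovering the target requires simultaneously exploiting the $Z$-side perturbation identity and the crossover $\wor\equiv\wzo$ above the level $a$ to patch the two pieces into a single clean expression.
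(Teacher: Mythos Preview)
Your proposal is correct and for the $y\to-\infty$ half it is essentially identical to the paper's argument: both start from the second line of \eqref{eqn:wzor:q}, compute the $y\to-\infty$ asymptotic of $\wqr(z,y)$ from its explicit form in Example~\ref{exam:1}, and then recognize the combination $\wz^{(q)}(x-u)+\int\wor(x,z)(\omega(z)-q)\wz^{(q)}(z-u)\,dz$ as $\wzo(x,u)$ for $u>a$ (noting that $\wz^{(q)}(z-u)=0$ forces $z>u>a$, so $\wor(x,z)=\wzo(x,z)$ there).

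For the $x\to\infty$ half your route differs from the paper's. You factor through $\wzo$: first get $\lim_{x}e^{\varphi(p)(y-x)}\wzo(x,y)$ from the $Z$-side perturbation against $\wz^{(p)}$, then feed this into the second line of Proposition~\ref{prop:wor:wo}, swap integrals, and collapse the inner bracket back to $\wor(u,y)$ via the same proposition. The paper instead factors through $\wpr$: it first computes $\lim_{x}e^{\varphi(p)(y-x)}\wpr(x,y)$ from Proposition~\ref{prop:wor:wo} with $\omega\equiv p$, then plugs this into the first line of \eqref{eqn:wzor:q}, and finally combines the two Stieltjes pieces (coming from the two terms in the $\wpr$-limit) by recognizing the measure identity $\wo(du,y)=W^{(p)}(du-y)+\int W^{(p)}(du-z)(\omega(z)-p)\wor(z,y)\,dz$ on $u\leq a$. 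Both arguments involve one Fubini-and-recombine step; yours isolates the SNLP asymptotics for $Z$ first and then applies the refraction decomposition, while the paper applies the refraction-level perturbation \eqref{eqn:wzor:q} first and pushes the classical asymptotics through $\wpr$. The tradeoff is that your approach keeps the intermediate objects ($\wzo$) closer to a single SNLP, whereas the paper's approach makes the target term $\int(\omega-p)\wor e^{\varphi(p)(y-\cdot)}$ appear immediately and only the $\delta$-correction needs a recombination; the overall length and difficulty are comparable.
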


For the one-sided first passage problems,
which are studied in \cite{Kyprianou2010:Rlevy, Kyprianou2013:occupationtime:Rlevy, Renaud2014:Rlevy},
we  have the following more general results from Proposition \ref{prop:limits} and Theorem \ref{thm:1}.
\begin{cor}[One-sided first passage times]\label{cor:3}\
\begin{enumerate}
\item
If $\omega(z)=p>0$ for all $z>M_{1}$ for some
$M_{1}\in\mathbb{R}$, we have
\begin{align*}
&\  \Em_{x}\left(e^{-L(\kc)}; \kc<\infty\right)=
\zor(x,c)- \wor(x,c) \\
&\ \quad\times
\frac{\frac{p}{\varphi(p)}+
\int_{c}^{\infty} (\omega(u)-p)\zor(u,c)e^{\varphi(p)(c-u)}\,du
- \delta \int_{c}^{a} e^{\varphi(p)(c-u)} \zo(du,c)}
{1
+ \int_{c}^{\infty} (\omega(u)-p)\wor(u,c)e^{\varphi(p)(c-u)}\,du
- \delta \int_{c-}^{a}e^{\varphi(p)(c-u)} \wo(du,c)}.
\end{align*}
\item
If $\omega(z)=q>0$ for  all $z<M_{2}$ for some
$M_{2}\in\mathbb{R}$, we have
\begin{align*}
&\ \Em_{x}\left(e^{-L(\kb)}; \kb<\infty\right)\\
=&\ e^{\Phi(q)(x-b)}
\frac{1+ \int_{-\infty}^{x} \wor(x,u)(\omega(u)-q) e^{\Phi(q)(u-x)}\,du
+ \delta \Phi(q)\int_{a}^{x} e^{\Phi(q)(u-x)} \wzo(x,u)\,du}
{1+ \int_{-\infty}^{x} \wor(b,u)(\omega(u)-q) e^{\Phi(q)(u-b)}\,du
+ \delta \Phi(q)\int_{a}^{b} e^{\Phi(q)(u-b)} \wzo(b,u)\,du}.
\end{align*}
\end{enumerate}
\end{cor}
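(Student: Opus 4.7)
The plan is to derive both identities by passing to suitable limits in the two-sided exit formulas of Theorem \ref{thm:1}, fed by the asymptotics furnished by Proposition \ref{prop:limits}.

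For assertion (1), I would start from the second identity of Theorem \ref{thm:1},
\[
\Em_{x}\left[e^{-L(\kc)}; \kc\leq \kb\right] = \zor(x,c) - \frac{\wor(x,c)}{\wor(b,c)}\zor(b,c),
\]
and let $b\to\infty$. Because $X$ has no positive jumps, $\kb\uparrow\infty$ almost surely as $b\to\infty$, and since $\omega(z)=p>0$ for $z$ sufficiently large, on the event $\{\kc=\infty\}$ the weight $L(t)$ is forced to $+\infty$, so that $e^{-L(\kc)}1_{\{\kc\leq \kb\}}$ converges to $e^{-L(\kc)}1_{\{\kc<\infty\}}$ and yields $\Em_{x}[e^{-L(\kc)};\kc<\infty]$ on the left. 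On the right, only the ratio $\zor(b,c)/\wor(b,c)$ depends on $b$. Applying Proposition \ref{prop:limits} with $y=c$ fixed and $x=b\to\infty$, both $e^{\varphi(p)(c-b)}\wor(b,c)/\varphi'(p)$ and $e^{\varphi(p)(c-b)}\zor(b,c)/\varphi'(p)$ converge to the displayed limits; the common prefactor $e^{\varphi(p)(c-b)}/\varphi'(p)$ cancels in the quotient, producing exactly the fraction that appears in the statement.

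For assertion (2), I would start from the first identity of Theorem \ref{thm:1},
\[
\Em_{x}\left[e^{-L(\kb)}; \kb\leq \kc\right] = \frac{\wor(x,c)}{\wor(b,c)},
\]
and let $c\to -\infty$. The events $\{\kb\leq\kc\}$ increase to $\{\kb<\infty\}$, and since $\omega(z)=q>0$ for $z$ sufficiently negative, on $\{\kb=\infty\}$ one again has $L(t)\to\infty$, so dominated convergence delivers $\Em_{x}[e^{-L(\kb)};\kb<\infty]$ on the left. On the right, the second part of Proposition \ref{prop:limits}, applied once with the fixed argument $x$ and once with $b$, gives
\[
\wor(x,c) \,\sim\, \Phi'(q)\,e^{\Phi(q)(x-c)}\,A(x),
\qquad
\wor(b,c) \,\sim\, \Phi'(q)\,e^{\Phi(q)(b-c)}\,A(b),
\]
where $A(\cdot)$ denotes the integral expression displayed there. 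Taking the ratio, the common factor $\Phi'(q)\,e^{-\Phi(q)c}$ cancels, leaving $e^{\Phi(q)(x-b)}\,A(x)/A(b)$, which is precisely the formula asserted.

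The main obstacle lies not in the algebra but in the two legitimacy checks that underpin each limiting procedure: arguing that the weight $e^{-L(\cdot)}$ genuinely suppresses any contribution from the infinite-exit-time event (which is what allows one to replace the monotone limit of $\{\kc\leq\kb\}$ or $\{\kb\leq\kc\}$ by $\{\kc<\infty\}$ or $\{\kb<\infty\}$), and confirming that the limiting denominators furnished by Proposition \ref{prop:limits} are strictly positive so the quotient is well defined. Both should follow from the hypotheses $p,q>0$ and the Feynman--Kac interpretation of $\wor,\zor$; once they are in hand, Corollary \ref{cor:3} reduces to direct substitution into the formulas of Theorem \ref{thm:1}.
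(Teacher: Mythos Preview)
Your proposal is correct and follows essentially the same approach as the paper, which simply states that Corollary~\ref{cor:3} is obtained from Theorem~\ref{thm:1} and Proposition~\ref{prop:limits} without spelling out further details. In fact you have supplied more of the argument than the paper does, including the monotone-convergence justification on the probabilistic side and the cancellation of the common prefactors in the ratios.
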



With \eqref{eqn:wqp} for $\omega(x)=p+ (q-p) \boldsymbol{1}_{\{x<a\}}$, we here only
compare the ratios in Corollary \ref{cor:3} with existing results
from \cite[Theorem 5 and 6]{Kyprianou2010:Rlevy} and \cite[Corollary
1 and 2]{Renaud2014:Rlevy} for $c=0$.

If $\omega(z)\equiv q=p$, then
\[\wo(x,y)=W^{(q)}(x-y), \quad
\zo(x,y)=Z^{(q)}(x-y), \quad \varphi(q)>\Phi(q).\]
and the following Laplace
transforms satisfy
\begin{equation}\label{eqn:laps}
\widehat{W^{(q)}}(\varphi(q))
=\frac{1}{\delta \varphi(q)} \quad\text{and}\quad
\widehat{dW^{(q)}}(\varphi(q))
:=\int_{0-}^{\infty} e^{-\varphi(q)x}W^{(q)}(dx)
=\frac{1}{\delta}.
\end{equation}
We thus have
\begin{align*}
&\  \frac{p}{\varphi(p)}+
\int_{c}^{\infty} (\omega(u)-p)\zor(u,c)e^{\varphi(p)(c-u)}\,du
- \delta \int_{c}^{a} e^{\varphi(p)(c-u)} \zo(du,c)\\
=&\ \frac{q}{\varphi(q)}- \delta q \int_{0}^{a} e^{-\varphi(q)u} W^{(q)}(u)\,du
= q \delta \int_{a}^{\infty} e^{-\varphi(q)u}W^{(q)}(u)\,du.
\end{align*}
Applying the Laplace transforms in \eqref{eqn:laps} one gives
\begin{align*}
&\ 1+ \int_{c}^{\infty} (\omega(u)-p)\wor(u,c)e^{\varphi(p)(c-u)}\,du
- \delta \int_{c-}^{a}e^{\varphi(p)(c-u)} \wo(du,c)\\
=&\ 1- \delta \int_{0-}^{a} e^{-\varphi(q)u} W^{(q)}(du)
= \delta \int_{a}^{\infty} e^{-\varphi(q)u} W^{(q)}(du) \\
\end{align*}
and
\begin{align*}
&\ 1+ \int_{-\infty}^{x} \wor(x,u)(\omega(u)-q) e^{\Phi(q)(u-x)}\,du
+ \delta \Phi(q)\int_{a}^{x} e^{\Phi(q)(u-x)} \wzo(x,u)\,du\\
=&\ 1+ \delta \Phi(q) \int_{a}^{x} e^{\Phi(q)(u-x)} \wz^{(q)}(x-u)\,du
\end{align*}
 which coincide with the formulas of the Laplace transforms in \cite[Theorem 5 and 6]{Kyprianou2010:Rlevy}.

If $\omega(z)= q 1_{\{z<a\}}$, $p=0$ and $\Em[Y_{1}]>\delta$, we
have
\[\varphi(0)=0, \quad
\frac{p}{\varphi(p)}=\psi_{Z}'(0)=\Em[Y_{1}]-\delta\] and
\[\omega(z)-p=q 1_{\{z<a\}}, \, \wor(z,c)=W^{(q)}(z), \,
\zor(z,c)=Z^{(q)}(z) \,\,\, \text{for} \,\,\, z<a.\]
Therefore,
\begin{align*}
&\ 1+ \int_{c}^{\infty} (\omega(u)-p)\wor(u,c)e^{\varphi(p)(c-u)}\,du
- \delta \int_{c-}^{a}e^{\varphi(p)(c-u)} \wo(du,c)\\
=&\ 1+ q \int_{0}^{a} W^{(q)}(z)\,dz- \delta \int_{0-}^{a} W^{(q)}(du)
= Z^{(q)}(a)- \delta W^{(q)}(a)
\end{align*}
and
\begin{align*}
&\ \frac{p}{\varphi(p)}+
\int_{c}^{\infty} (\omega(u)-p)\zor(u,c)e^{\varphi(p)(c-u)}\,du
- \delta \int_{c}^{a} e^{\varphi(p)(c-u)} \zo(du,c)\\
=&\ \psi_{Z}'(0)+ q \int_{0}^{a}Z^{(q)}(z)\,dz- \delta q \int_{0}^{a} W^{(q)}(u)\,du\\
=&\ \Em[Y_{1}]-\delta + q \int_{0}^{a} \left(\zz^{(q)}(y)- \delta \zz^{(q)}(a-y)W^{(q)}(y)\right)\,dy
\end{align*}
where the last equation comes from the integral over $[0,a]$ of
functions
\[ \zz^{(q)}(x)- Z^{(q)}(x)= \delta q \int_{0}^{x} \wz^{(q)}(x-z) W^{(q)}(z)\,dz,
\]
by taking $\omega(z)\equiv q$, $y=0$ in Proposition \ref{prop:wor:wo},
and they coincide with \cite[Corollary 1(i)]{Renaud2014:Rlevy}.
Similarly, $\omega(z)-q=-q 1_{\{z>a\}}$, then
\[\wor(x,z)=\wz^{(\omega)}(x,z)=\wz(x-z)\quad\text{for}\quad z>a.\]
We have
\begin{align*}
&\ 1+ \int_{-\infty}^{x} \wor(x,u)(\omega(u)-q) e^{\Phi(q)(u-x)}\,du
+ \delta \Phi(q)\int_{a}^{x} e^{\Phi(q)(u-x)} \wzo(x,u)\,du\\
=&\ 1- q \int_{a}^{x} e^{\Phi(q)(u-x)}\wz(x-u)\,du+
\delta \Phi(q) \int_{a}^{x} e^{\Phi(q)(u-x)}\wz(x-u)\,du\\
=&\ 1- (q-\delta \Phi(q)) \int_{0}^{x-a}\wz(y) e^{-\Phi(q) y}\,dy,
\end{align*}
which coincides with  \cite[Corollary 2(i)]{Renaud2014:Rlevy}.

For the end of this section,
we consider the case of $\omega(\cdot)$ being an $n$-step function.
Besides the relation given in Proportion \ref{prop:idens:2},
an inductive way is provided to define the function, similar to \cite{BoLi:sub1},

\begin{exam}\label{exam:2} Let $\lambda_{j}\geq0$ and $a_{n}> a_{n-1}>\cdots>a_{1}$ be
constants. Let \[\omega_{n}(x)=\lambda_{0}+ \sum_{j=1}^{n}
(\lambda_{j}-\lambda_{j-1}) 1_{\{x\geq a_{j}\}}\] be a step
function. Then $w^{(\omega_{n})}(x,y)$ and $z^{(\omega_{n})}(x,y)$
can be define inductively as follow. For $k\geq 1$,
\[
w^{(\omega_{k})}(x,y)
= w^{(\omega_{k-1})}(x,y)+ (\lambda_{k}-\lambda_{k-1}) \int_{a_{k}}^{x} w^{(\lambda_{k})}(x,z) w^{(\omega_{k-1})}(z,y) dz
\]
and for $a_{1}>y$,
\[
 z^{(\omega_{k})}(x,y) = z^{(\omega_{k-1})}(x,y)+
(\lambda_{k}-\lambda_{k-1}) \int_{a_{k}}^{x} w^{(\lambda_{k})}(x,z)
z^{(\omega_{k-1})}(z,y) dz
\]
with $w^{(\omega_{0})}(x,y)=w^{(\lambda_{0})}(x,y)$
and $z^{(\omega_{0})}(x,y)=z^{(\lambda_{0})}(x,y)$ defined in Example \ref{exam:1}.
\end{exam}
\begin{proof}[Proof of Example \ref{exam:2}]
Observing that $\omega_{k}(z)-\omega_{k-1}(z)=0$ for $z<a_{k}$, we have
\[
w^{(\omega_{k})}(x,y)
=w^{(\omega_{k-1})}(x,y) \quad \text{for $x<a_{k}$}.
\]
Since $\omega_{k}(z)-\lambda_{k}=0$ for $z\geq a_{k}$ and $w^{(\omega_{k})}(z,y)=0$ for $z>y$, we have from \eqref{eqn:wzor:q}
\begin{align*}
w^{(\omega_{k})}(x,y)
=&\ w^{(\lambda_{k})}(x,y)+ \int_{y}^{x} w^{(\lambda_{k})}(x,z)(\omega_{k}(z)-\lambda_{k}) w^{(\omega_{k})}(z,y) dz\\
=&\ w^{(\lambda_{k})}(x,y)+ \int_{y}^{a_{k}} w^{(\lambda_{k})}(x,z)(\omega_{k}(z)-\lambda_{k}) w^{(\omega_{k})}(z,y) dz\\
=&\ w^{(\lambda_{k})}(x,y)+ \int_{y}^{a_{k}} w^{(\lambda_{k})}(x,z)(\omega_{k-1}(z)-\lambda_{k}) w^{(\omega_{k-1})}(z,y) dz.
\end{align*}
Meanwhile, applying \eqref{eqn:wzor:q} to $w^{(\omega_{k-1})}(x,y)$
with $q=\lambda_{k}$ we have
\[
w^{(\omega_{k-1})}(x,y)=
w^{(\lambda_{k})}(x,y)+ \int_{y}^{x} w^{(\lambda_{k})}(x,z)(\omega_{k-1}(z)-\lambda_{k}) w^{(\omega_{k-1})}(z,y) dz.
\]
Inductive formula for $w^{(\omega_{k})}(x,y)$ is thus proved by
comparing the two identities above. Similar discussion could be
applied to derive formulas for $z^{(\omega_{k})}(x,y)$ but under the
condition $a_{1}\geq y$.
\end{proof}

\section{Proofs}\label{sec:5}
Before proving our main results, we comment on  the structure of this section.
Lemma \ref{lem:equation} is slightly postponed and the proof of Theorem \ref{thm:1} is split into two parts.
More specifically,
instead of showing the existence of solution to \eqref{eqn:lem} directly, using a quantity associated to  the event $\{\kb\leq \kc\}$ we first define  in \eqref{eqn:kb} an auxiliary function
$\wor(x,y)$, which
solves equation \eqref{defn:wor}
and is used in the expression for \eqref{thm:1a}.
We then establish the uniqueness of solution for Lemma \ref{lem:equation}
and find a solution to \eqref{eqn:lem}   using function
$\wor(\cdot,\cdot)$. After finishing the proof for Lemma \ref{lem:equation},
we continue with  proofs for the rest of results in Theorem \ref{thm:1}.

\begin{proof}[Proof of Theorem \ref{thm:1}(1)] We first focus on $\{\kb\leq\kc\}$.
To simplify the notation, we denote by
\[
\mathcal{A}(x;b):= \Em_{x}\left(\exp\left(-\int_{0}^{\kb}\omega(X_{t}) dt\right); \kb\leq \kc\right), \quad\text{for $c\leq x\leq b$}.
\]

We have from the absence of positive jumps and the Markov property for $X$ that
\begin{equation}\label{eqn:markov}
\mathcal{A}(x;z)=\mathcal{A}(x;y)\mathcal{A}(y;z)
\quad \text{for any $z>y>x>c$}.
\end{equation}

On the other hand, for every $t>0$, it holds that
\[
1-e^{-L(t)}=e^{-L(t)}\int_{0}^{t} e^{L(s)}\omega(X_{s}) ds=\int_{0}^{\infty} 1_{\{s<t\}}\omega(X_{s}) e^{-(L(t)-L(s))} ds.
\]
Applying  Fubini's theorem we  have
\begin{align*}
&\ \Em_{x}\left(1-e^{-L(\kb)}; \kb\leq \kc\right)\\
=&\ \int_{0}^{\infty} \Em_{x}\left(\omega(X_{s}) e^{-(L(\kb)-L(s))}; s<\kb\leq \kc\right) ds\\
=&\ \int_{0}^{\infty} \Em_{x}\left(\omega(X_{s}) 1_{\{s\leq \kb\wedge\kc\}}
\left(e^{-L(\kb-s)} 1_{\{ (\kb-s)\leq (\kc-s)\}}\right)\circ \theta_{s}\right) ds
\end{align*}
where $\theta_{s}$ is the shifting operator of $X$ such that $X_t\circ\theta_s=X_{s+t} $ for any $s, t\geq 0 $.
Applying the Markov property gives
\begin{align}
\mathcal{A}(x;b)=&\ \Pm_{x}(\kb\leq \kc)- \int_{c}^{b} \Rr(x,dy)\omega(y) \mathcal{A}(y;b)\label{eqn:FeyKac}\\
=&\ \frac{w(x,c)}{w(b,c)} \left(1- \int_{c}^{b}w(b,y)\omega(y)\mathcal{A}(y;b)\right) dy
+ \left(\int_{c}^{x} w(x,y)\omega(y) \mathcal{A}(y;x) dy\right)\mathcal{A}(x;b)
\non
\end{align}
where the identity \eqref{eqn:markov} is applied in the last identity for every $y\in(c,x)$. At the risk of abusing notation define
\begin{equation}\label{eqn:kb}
\wor(x,y):= w(x,y)\left/ \middle(1- \int_{y}^{x} w(x,z)\omega(z) \mathcal{A}(z;x) dz\right)
\quad\text{for $x>y\geq c$,}
\end{equation}
then $\mathcal{A}(x;b)=\frac{\wor(x,c)}{\wor(b,c)}$. Substituting this identity into \eqref{eqn:kb} again  gives
\[
\wor(x,y)= w(x,y)+ \int_{y}^{x} w(x,z)\omega(z) \wor(z,y) dz,
\]
which is  equation \eqref{defn:wor} and can be generalised to $\mathbb{R}\times\mathbb{R}$ naturally.
\end{proof}

\begin{proof}[Proof of Lemma \ref{lem:equation}]
Equation \eqref{eqn:lem} is a kind of renewal type equation.
Similar to the proof of \cite[Lemma 2.1]{BoLi:sub1}, we only need to focus on an arbitrary and fixed cylinder set $[c,b]\times[c,b]$ for the functions involved. Let $M_{3}\geq \sup_{x\in[c,b]}\omega(x)$ and $s_{0}>0$ such that $\widehat{\wz}(s_{0})\leq \frac{1}{2M_{3}}$.

\paragraph{Uniqueness}
To prove the uniqueness of solution of \eqref{eqn:lem} we show that, for fixed $y_{0}\in[c,b]$, $H^{(\omega)}(x,y_{0})=0$ is the only solution of
\[
H^{(\omega)}(x,y_{0})= \int_{y_{0}}^{x}w(x,z)\omega(z) H^{(\omega)}(z,y_{0}) dz.
\]
Actually, we have from \eqref{defn:wr:2} and our assumption that, $w(x,z)\leq \wz(x-z)$. Then
\begin{align*}
|e^{-s_{0} x} H^{(\omega)}(x,y_{0})|
\leq &\ \sup_{z\in[y_{0}, x]}|e^{-s_{0}z}H^{(\omega)}(z,y_{0})| \left( M_{3} \int_{y_{0}}^{x} e^{-s_{0}(x-z)}\wz(x-z) dz\right)\\
\leq&\  \frac{1}{2} \sup_{z\in[y_{0}, x]}|e^{-s_{0}z}H^{(\omega)}(z,y_{0})|,
\quad \text{ for any $x\in[c,b]$}.
\end{align*}
Thus $|e^{-s_{0} x} H^{(\omega)}(x,y_{0})|=0$  and $H^{(\omega)}(\cdot,y_{0})\equiv 0$.

\paragraph{Existence}
Now, $\wor(x,y)$ is well defined in \eqref{eqn:kb} and is the unique solution to
\[
\wor(x,y)= w(x,y)+ \int_{y}^{x} w(x,z)\omega(z) \wor(z,y) dz,
\]
from our previous discussion. For any $h(x,y)$,  define
\begin{equation}\label{eqn:Hw}
H^{(\omega)}(x,y):=h(x,y)+ \int_{y}^{x} \wor(x,z)\omega(z)h(z,y) dz.
\end{equation}
Then we have by change of variable and \eqref{defn:wor} that
\begin{align*}
&\ \int_{y}^{x} w(x,z)\omega(z) H^{(\omega)}(z,y) dz\\
=&\ \int_{y}^{x} w(x,z)\omega(z) h(z,y) dz + \int_{x>z>u>y} \left(w(x,z)\omega(z) \wor(z,u)\right)\omega(u)h(u,y) dz du\\
=&\ \int_{y}^{x} \wor(x,u) \omega(u) h(u,y) du= H^{(\omega)}(x,y)- h(x,y).
\end{align*}
 And this finishes the proof.
\end{proof}

\begin{rmk}
Identity \eqref{eqn:FeyKac} is essentially the Feyman-Kac formula in the context.
It can also be derived following the Poisson observation method in \cite{Zhou2014:occupationtime:levy, Zhou2015:occupationtime:levy, BoLi:sub1}.
\end{rmk}

With Lemma \ref{lem:equation} proved, one is free to use the results from Remark \ref{rmk:equation}.

\begin{rmk}\label{rmk:defn:zor}
Recalling the definition of $\zor$ in \eqref{defn:zor},
a conclusion from \eqref{eqn:Hw} with $h\equiv 1$ is that
\begin{equation}\label{defn:zor:2}
\zor(x,y)=1+ \int_{y}^{x}\wor(x,z)\omega(z) dz.
\end{equation}
And such defined $\zor(x,y)$ is the unique solution to \eqref{defn:zor}.
\end{rmk}

\begin{proof}[Proof of Theorem \ref{thm:1}(2)]
With $(\wor,\zor)$, we are ready to prove the rest of main results.

\paragraph{The resolvent $\Ror$}
For any bounded and measurable $f\geq0$,
it follows from \eqref{eqn:FeyKac}  that
\[
\Rr f(x)- \Ror f(x)= \int_{c}^{b} \Rr(x,dy) \omega(y) \Ror f(y).
\]
Thus, applying Proposition \ref{prop:Rlevy} to the equation above gives
\begin{align*}
\Ror f(x)=&\ w(x,c)\times c_{f}- \int_{c}^{x} w(x,y)f(y) dy
+ \int_{c}^{x} w(x,y) \omega(y) \Ror f(y) dy\\
=&\ \wor(x,c)\times c_{f}- \int_{c}^{x} \wor(x,y) f(y) dy\\
=&\ \int_{c}^{b} \left(\frac{\wor(x,c)}{\wor(b,c)}\wor(b,y)- \wor(x,y)\right)f(y) dy,
\end{align*}
where $c_{f}$ is a constant given by
\[
c_{f}=\int_{c}^{b} \frac{w(b,y)}{w(b,c)} \left(f(y)- \omega(y) V^{(\omega)}f(y)\right)\,dy,
\]
the second identity comes from Lemma \ref{lem:equation} and Remark \ref{rmk:equation},
and the last identity comes from the boundary condition that $\Ror f(b)=0$.

\paragraph{The quantity for $\{\kc\leq \kb\}$.} First notice that
\[
\Ror \left(\omega\right)(x)
= \Em_{x}\left(\int_{0}^{\kb\wedge\kc} e^{-L(t)}\omega(X_{t}) dt \right)
= 1- \Em_{x}\left(\exp\left(-\int_{0}^{\kb\wedge\kc}\omega(X_{s}) ds\right)\right).
\]
On the other hand, taking use of expression of the resolvent just obtained, we have for $x\in[c,b]$
\begin{align*}
\Ror \left(\omega\right)(x)=&\ \int_{c}^{b} \left(\frac{\wor(x,c)}{\wor(b,c)}\wor(b,y)- \wor(x,y)\right)\omega(y) dy\\
=&\ \frac{\wor(x,c)}{\wor(b,c)}\left(\zor(b,c)-1\right)- \left(\zor(x,c)-1\right),
\end{align*}
from Remark \ref{rmk:defn:zor}. Therefore, we have from the previous conclusion for event $\{\kb\leq\kc\}$ that
\begin{equation}
\Em_{x}\left(e^{-L(\kc)}; \kc\leq \kb\right)
=\zor(x,c)- \frac{\wor(x,c)}{\wor(b,c)}\zor(b,c).
\end{equation}
And this ends all the proof of Theorem \ref{thm:1}.
\end{proof}

For the proofs of Proposition \ref{prop:wor:wo} and \ref{prop:idens:2},
the fact $0=W(u-v)=w(u,v)=\wor(u,v)$ for $u<v$ is frequently used,
and we could rewrite \eqref{defn:wr:1} and \eqref{defn:wr:2} simply as
\begin{align}
w(x,y)=&\ W(x-y)+ \delta \int_{\mathbb{R}} 1_{\{z>a\}} \wz(x-z)W(dz-y)
\label{eqn:wr:12}\\
=&\ \wz(x-y)- \delta \int_{\mathbb{R}} 1_{\{z\leq a\}}\wz(x-z)W(dz-y).
\label{eqn:wr:22}
\end{align}

\begin{proof}[Proof of Proposition \ref{prop:wor:wo}]
To obtain identities between $\wor$ and $(\wo,\wzo)$,
denote  by 
\[
g^{(\omega)}(x,y):=W^{(\omega)}(x,y)+ \delta \int_{a}^{x} \wz^{(\omega)}(x,z)W^{(\omega)}(dz,y).
\]
the right-hand side of \eqref{eqn:wor:wo}.
Making use of \eqref{eqn:wr:22} for $w(x,y)$, we have for $x>y$
\begin{align*}
&\ \int_{y}^{x} w(x,z) \omega(z) g^{(\omega)}(z,y) dz\\
=&\ \int_{\mathbb{R}} \left(\wz(x-z)- \delta \int_{\mathbb{R}}
1_{\{u\leq a\}}\wz(x-u)W(du-z)\right) \omega(z)\\
&\quad \times \left(\wo(z,y)+ \delta \int_{\mathbb{R}} 1_{\{v>a\}}
\wzo(z,v) \wo(dv,y)\right) dz\\
=&\ \int_{\mathbb{R}} \wz(x-z)\omega(z)\wo(z,y) dz + \delta \iint_{v>a}
\left(\wz(x-z) \omega(z) \wzo(z,v) dz\right) \wo(dv,y)\\
&\quad - \delta \iint_{u\leq a} \wz(x-u) \left(W(du-z)\omega(z)\wo(z,y)\right) dz,
\end{align*}
where the fourth term after the first equality vanishes since
$1_{\{u\leq a\}} 1_{\{v>a\}} W(du-z)\wzo(z,v) \equiv 0\ \forall z\in\mathbb{R}$.
Further applying
\eqref{defn:wo} to $\wzo(x,y)$
and
\eqref{eqn:wzo:stj} to $\wo(dx,y)$,
the equation above equals to
\begin{align*}
&\ \int_{\mathbb{R}} \wz(x-z)\omega(z)\wo(z,y) dz
+\delta \int_{v>a} \left(\wzo(x,v)-\wz(x-v)\right) \wo(dv,y)\\
&\quad -\delta \int_{u\leq a} \wz(x-u) \left(\wo(du,y)- W(du-y)\right)\\
=&\ \int_{\mathbb{R}} \wz(x-z)\omega(z)\wo(z,y) dz
- \delta\int_{\mathbb{R}} \wz(x-u)\wo(du,y)\\
&\quad +\delta \left( \int_{v>a} \wzo(x,v)\wo(dv,y)+ \int_{u\leq a}\wz(x-u)W(du-y) \right).
\end{align*}
On the other hand, applying formula \eqref{eqn:wzo:stj}, \eqref{eqn:wzw} and \eqref{defn:wo}, we have
\begin{align*}
&\  \delta\int_{\mathbb{R}} \wz(x-u)\wo(du,y)\\
=&\  \delta\int_{\mathbb{R}} \wz(x-u)\left(W(du-y)+ \int_{\mathbb{R}} W(du-v)\omega(v) \wo(v,y) dv\right)\\
=&\ \wz(x-y)- W(x-y)+ \int_{\mathbb{R}} \left(\wz(x-v)- W(x-v)\right)\omega(v)\wo(v,y) dv\\
=&\ \wz(x-y)- \wo(x,y)+ \int_{\mathbb{R}} \wz(x-z)\omega(z)\wo(z,y) dz.
\end{align*}

Putting pieces together gives
\begin{align*}
&\ \int_{y}^{x} w(x,z)\omega(z) g^{(\omega)}(z,y) dz\\
=&\ \left(\wo(x,y)+ \delta \int_{a}^{x} \wzo(x,z)\wo(dz,y)\right)-
\left(\wz(x-y)- \delta \int_{y-}^{a} \wz(x,z)W(dz-y)\right)\\
=&\ g^{(\omega)}(x,y)- w(x,y)
\end{align*}
from the definition of $g^{(\omega)}(x,y)$ and \eqref{defn:wr:2}. It can be found that $\wor(x,y)$ and $g^{(\omega)}(x,y)$ satisfies the same equation. Thus $\wor(x,y)=g^{(\omega)}(x,y)$.

The second equation of \eqref{eqn:wor:wo} can be proved following the same idea by making use of \eqref{eqn:wr:12}.
\eqref{eqn:zor:zo} is a direct consequence of applying Remark \ref{rmk:defn:zor} to \eqref{eqn:wor:wo}.
And this finishes the proof.
\end{proof}

\begin{proof}[Proof of Proposition \ref{prop:idens:2}] To prove Proposition \ref{prop:idens:2}, we first claim that
\begin{equation}\label{defn:wor:2}
\wor(x,y)=w(x,y)+ \int_{y}^{x} \wor(x,z)\omega(z)w(z,y) dz.
\end{equation}

Denoting by $k(x,y)$ the right hand side of (\ref{prop:idens:2}),
by change of variable and  the order of integration we have
\begin{align*}
&\ \int_{\mathbb{R}} w(x,z)\omega(z)k(z,y) dz\\
=&\ \int_{\mathbb{R}} w(x,u)\omega(u)\left(w(u,y)+ \int_{\mathbb{R}} \wor(u,v)\omega(v)w(v,y) dv\right) du\\
=&\ \int_{\mathbb{R}} \left(w(x,v)+ \int_{\mathbb{R}} w(x,u)\omega(u)\wor(u,v) du\right)
\omega(v)w(v,y) dv\\
=&\ \int_{\mathbb{R}}\wor(x,v)\omega(v)w(v,y) dv= k(x,y)- w(x,y),
\end{align*}
by definition, and $k(x,y)=\wor(x,y)$ follows from the uniqueness of solution to \eqref{defn:wor}.

On the other hand, applying \eqref{defn:wor:2} to $w^{(\omega_{1})}$ and \eqref{defn:wor} to $w^{(\omega_{2})}$ twice in the following computation, we have for $x>y$
\begin{align*}
&\ \int_{\mathbb{R}}w^{(\omega_{1})}(x,z)\omega_{2}(z)w^{(\omega_{2})}(z,y) dz\\
=&\ \int_{\mathbb{R}}\left(w(x,z)+ \int_{\mathbb{R}}w^{(\omega_{1})}(x,u)\omega_{1}(u)w(u,z) du\right) \omega_{2}(z)w^{(\omega_{2})}(z,y) dz\\
=&\ w^{(\omega_{2})}(x,y)-w(x,y)+ \int_{\mathbb{R}}w^{(\omega_{1})}(x,u)\omega_{1}(u)\left(w^{(\omega_{2})}(u,y)-w(u,y)\right) du\\
=&\ w^{(\omega_{2})}(x,y)-w^{(\omega_{1})}(x,y)+ \int_{\mathbb{R}}w^{(\omega_{1})}(x,u)\omega_{1}(u)w^{(\omega_{2})}(u,y) du,
\end{align*}
which gives the desired equation.
The identity for $(z^{(\omega_{1})},z^{(\omega_{2})})$ can be proved by applying Remark \ref{rmk:defn:zor} to the identity of $(w^{(\omega_{1})}, w^{(\omega_{2})})$. And this finishes the proof.
\end{proof}

\begin{proof}[Proof of Proposition \ref{prop:limits}]
Firstly, let us consider the case of constant $\omega(\cdot)$.
Applying limit identities \eqref{eqn:limit:wz} to Proposition
\ref{prop:wor:wo} with $\omega(\cdot)=p$, one can check that, as
$x\to\infty$
\begin{align*}
 e^{\varphi(p)(y-x)} \wpr(x,y)
=&\ e^{\varphi(p)(y-x)}\left(\wz^{(p)}(x-y)-\delta \int_{y-}^{a}\wz^{(p)}(x-z)W^{(p)}(dz-y)\right)\non\\
\to&\ \varphi'(p)
\left(1- \delta\int_{y-}^{a} e^{\varphi(p)(y-z)}W^{(p)}(dz-y)\right)
\end{align*}
and
\begin{align*}
 e^{\varphi(p)(y-x)} \zpr(x,y)
=&\ e^{\varphi(p)(y-x)}\left(\zz^{(p)}(x-y)- \delta p \int_{y}^{a} \wz^{(p)}(x-z)W^{(p)}(z-y) dz\right)\non\\
\to&\ \varphi'(p)
\left(\frac{p}{\varphi(p)} - p \delta\int_{y}^{a} e^{\varphi(p)(y-z)}W^{(p)}(z-y) dz\right).
\end{align*}
Plugging them into \eqref{eqn:wzor:q},  with the fact of
$W^{(p)}(z-y)=0$ for $z<y$ we have
\begin{align*}
&\ \frac{e^{\varphi(p)(y-x)}\wor(x,y)}{\varphi'(p)}
=\frac{e^{\varphi(p)(y-x)}}{\varphi'(p)} \left(\wpr(x,y)+
\int_{\mathbb{R}}\wpr(x,z)(\omega(z)-p)\wor(z,y)\,dz\right)\\
\to&\ 1
- \delta \int_{\mathbb{R}} 1_{\{u\leq a\}}e^{\varphi(p)(y-u)}W^{(p)}(du-y)
+ \int_{\mathbb{R}} (\omega(z)-p)\wor(z,y) e^{\varphi(p)(y-z)}\,dz\\
&\quad - \delta \iint 1_{\{u\leq a\}}W^{(p)}(du-z)(\omega(z)-p)\wor(z,y) e^{\varphi(p)(y-u)}\,dz\\
=&\ 1
+ \int_{\mathbb{R}} (\omega(u)-p)\wor(u,y)e^{\varphi(p)(y-u)}\,du
- \delta \int_{y-}^{a}e^{\varphi(p)(y-u)} \wo(du,y),
\end{align*}
where for $u\leq a$ the fact $\wpr(u,y)= W^{(p)}(u-y)$,
$\wor(u,y)=\wo(u,y)$  and
the following Stieltjes measure from \eqref{eqn:wzor:q}  is applied in the last equation,
\[
\wor(du,y)= \wpr(du,y)+ \int_{\mathbb{R}} \wpr(du,z)(\omega(z)-p)\wor(z,y)\,dz.
\]

Similarly, as $x\to\infty$,
\begin{align*}
&\ \frac{e^{\varphi(p)(y-x)}\zor(x,y)}{\varphi'(p)}
=\frac{e^{\varphi(p)(y-x)}}{\varphi'(p)} \left(\zpr(x,y)+
\int_{z>y}\wpr(x,z)(\omega(z)-p)\zor(z,y)\,dz\right)\\
\to&\ \frac{p}{\varphi(p)}
- p \delta \int_{\mathbb{R}}  1_{\{z\leq a\}}
e^{\varphi(p)(y-z)}W^{(p)}(z-y) dz
+ \int_{z>y} (\omega(z)-p)\zor(z,y)e^{\varphi(p)(y-z)}\,dz\\
&\quad - \delta \iint W^{(p)}(du-z) 1_{\{u\leq a\}} 1_{\{z>y\}} (\omega(z)-p) \zor(z,y) e^{\varphi(p)(y-u)}\,dz \\
=&\ \frac{p}{\varphi(p)}+
\int_{u>y} (\omega(u)-p)\zor(u,y)e^{\varphi(p)(y-u)}\,du
- \delta \int_{y}^{a} e^{\varphi(p)(y-u)} \zo(du,y),
\end{align*}
with $\zor(u,y)=\zo(u,y)$, $\zpr(u,y)=Z^{(p)}(u-y)$ for $u\leq a$  and
\[
\zor(du,y)=p W^{(p)}(u-y) \,du+ \int_{z>y} W^{(p)}(du-z)(\omega(z)-p)\zor(z,y)\,dz
\]
is needed for the last equation.

Following the same procedure, as $y\to-\infty$ we have from \eqref{eqn:limit:wz}
\begin{align*}
e^{\Phi(q)(y-x)} \wqr(x,y)
=&\ e^{\Phi(q)(y-x)} \left( W^{(q)}(x-y)+ \delta \int_{a}^{x} \wz^{(q)}(x-z) W^{(q)}(dz-y)\right)\\
\to&\ \Phi'(q)\left(1+ \delta \Phi(q)\int_{a}^{x} e^{\Phi(q)(z-x)} \wz^{(q)}(x-z) \,dz\right).
\end{align*}
Plugging it into \eqref{eqn:wzor:q}, we have
\begin{align*}
&\ \frac{e^{\Phi(q)(y-x)}\wor(x,y)}{\Phi'(q)}
= \frac{e^{\Phi(q)(y-x)}}{\Phi'(q)} \left(\wqr(x,y)+ \int_{y}^{x}\wor(x,z)(\omega(z)-q) \wqr(z,y)\,dz\right)\\
\to&\ 1+ \delta \Phi(q) \int_{\mathbb{R}}1_{\{z>a\}} e^{\Phi(q)(z-x)} \wz^{(q)}(x-z)\,dz+ \int_{\mathbb{R}} \wor(x,z)(\omega(z)-q) e^{\Phi(q)(z-x)}\,dz\\
&\quad + \delta \Phi(q) \iint \wor(x,z)(\omega(z)-q)\wz^{(q)}(z-u) e^{\Phi(q)(u-x)} 1_{\{u>a\}}\,dudz\\
=&\ 1+ \delta \Phi(q)\int_{u>a} e^{\Phi(q)(u-x)} \wzo(x,u)\,du
+ \int_{\mathbb{R}} \wor(x,z)(\omega(z)-q) e^{\Phi(q)(z-x)}\,dz,
\end{align*}
where \eqref{eqn:wzor:q} and the facts
$\wor(x,u)=\wzo(x,u), \wqr(x,u)=\wz^{(q)}(x-u)$ for $u>a$
are applied in the last identity. One can also find
\[\D\lim_{y\to-\infty} \frac{e^{\Phi(q)(y-x)}\zor(x,y)}{\Phi'(q)}.\]
 This completes the proof.
\end{proof}

\paragraph{Acknowledgement} Bo Li is supported by National Natural Science Foundation of China (No.
11601243). Bo Li and Xiaowen Zhou are supported by NSERC (RGPIN-2016-06704).


\end{document}